\documentclass[11pt]{article}
\usepackage[T1]{fontenc}
\usepackage[utf8]{inputenc}
\usepackage{lmodern}
\usepackage[margin=1in]{geometry}
\usepackage{amsmath,amssymb,amsthm}
\usepackage{microtype}
\usepackage{tikz}
\usetikzlibrary{arrows.meta,positioning}
\usepackage[hidelinks]{hyperref}

\newtheorem{theorem}{Theorem}[section]
\newtheorem{lemma}[theorem]{Lemma}
\newtheorem{proposition}[theorem]{Proposition}
\newtheorem{corollary}[theorem]{Corollary}
\theoremstyle{definition}
\newtheorem{definition}[theorem]{Definition}
\theoremstyle{remark}

\numberwithin{equation}{section}
\allowdisplaybreaks[1]
\hypersetup{
pdftitle={A Hierarchical Online-Compression Approach to Proportional Subsets of Bounded Orthonormal Systems},
pdfauthor={Will Burstein, Alex Iosevich, Ben Krause, Siddharth Mulherkar}
}

\title{A Hierarchical Online-Compression Approach to\\
Proportional Subsets of Bounded Orthonormal Systems}
\author{Will Burstein \and Alex Iosevich\thanks{Alex Iosevich acknowledges partial support from the National Science
Foundation under grant DMS-2506858. Ben Krause acknowledges partial
support from the Engineering and Physical Sciences Research Council
through New Investigator Award EP/W010275/2 and from UK Research and
Innovation through Horizon Europe Guarantee grant EP/Y007336/1 for
his ERC Starting Grant project PCMEA (Pointwise Convergence of Multiple
Ergodic Averages).} \and Ben Krause}
\date{}

\begin{document}
\maketitle

\begin{abstract}
Let $(\phi_i)_{i=1}^n$ be an orthonormal system on a probability space with $\|\phi_i\|_\infty\leq L$. We prove that, for every $2\leq k<n$, there is a set $I$ of at least $n-k$ indices such that
$$
\|f\|_2\leq CM_k\sqrt{\log(e+M_k)}\|f\|_1
$$
for every $f$ in the span of $(\phi_i)_{i\in I}$, where $M_k=L\sqrt{\frac{n}{k}\log(2k)}$. This replaces the logarithmic exponent $\frac{5}{2}$ in the corresponding theorem of Gu\'edon, Mendelson, Pajor, and Tomczak-Jaegermann by $\frac{1}{2}$. We also obtain a partition into two sets of cardinality $\frac{n}{2}+O(\sqrt n)$ for which both spans satisfy the resulting norm comparison, and for uniformly bounded systems the proportional estimate is $C\sqrt{\log n\log\log n}$ for sufficiently large $n$.

This is the third paper in a series systematically using gradient descent as a proof mechanism in harmonic analysis, following the $\Lambda(p)$ and Bernoulli restricted isometry papers of Burstein, Iosevich, and Krause, and the descent here is entropic mirror descent. We represent each function in an $L_1$ ball by a probability measure of controlled relative entropy, and this entropy pays for the corrections to the predictions of its coefficients. Processing the thresholds from large to small, we obtain a single weighted bound for all the corrections, which we turn into a finite description of approximate coefficient squares and then into a conditional Rademacher estimate, and symmetrization completes the argument directly on the $L_1$ ball. We also record consequences for concentration and for recovery from coefficient measurements.
\end{abstract}

\section{Introduction}

Orthogonality tells us exactly how to calculate the $L_2$ norm of a linear combination, since for an orthonormal system $(\phi_i)_{i=1}^n$,
$$
\left\|\sum_{i=1}^n a_i\phi_i\right\|_2^2
=\sum_{i=1}^n|a_i|^2.
$$
There is no comparable identity for the $L_1$ norm. A function can have a substantial $L_2$ norm because it is large on a small set, while its $L_1$ norm remains small. Can we remove a controlled number of functions from the system so that this concentration becomes less severe for every linear combination of the remaining functions?

The words ``every linear combination'' are essential. Choosing a favorable subsystem for a single function is a much easier problem, whereas here the subsystem must be chosen once, and the resulting inequality must hold for all coefficient vectors. This is the same uniformity which appears in Bourgain's $\Lambda(p)$ selection theorem \cite{Bourgain1989} and in the restricted isometry problem \cite{Bourgain2014}.

Throughout the paper, $(\Omega,\mu)$ is a probability space and $(\phi_i)_{i=1}^n$ is a real or complex orthonormal system in $L_2(\mu)$ satisfying
\begin{equation}\label{eq:bounded-system}
\|\phi_i\|_\infty\leq L
\end{equation}
for $1\leq i\leq n$, so that in particular, $L\geq1$. We write $[n]=\{1,\ldots,n\}$,
$$
E=\operatorname{span}\{\phi_1,\ldots,\phi_n\},
$$
and $E_I=\operatorname{span}\{\phi_i:i\in I\}$, and all spans may be taken over $\mathbb C$, even when the original functions are real. For $1\leq p<\infty$ we use
$$
\|f\|_p=\left(\int_\Omega |f|^pd\mu\right)^{\frac{1}{p}},
$$
while $\|f\|_\infty$ denotes the essential supremum. For a set $A$, the function $1_A$ is its indicator, and all logarithms are natural. The letters $c$ and $C$ denote positive absolute constants whose values may change from one occurrence to another, and a subscript indicates a permitted dependence.

Gu\'edon, Mendelson, Pajor, and Tomczak-Jaegermann developed this selection problem in \cite{GMPT2007,GMPT2008}. Their Theorem 3 in \cite{GMPT2008} provides a set $I$ with $|I|\geq n-k$ and an $L_2$ to $L_1$ comparison with factor
\begin{equation}\label{eq:old-factor}
C\mu_k(\log\mu_k)^{\frac{5}{2}},
\end{equation}
where $\mu_k=L\sqrt{\frac{n}{k}}\sqrt{\log k}$. Our result improves the power of the second logarithm in this estimate, and we use $\log(2k)$ and $\log(e+M_k)$ in order to state the bound without exceptions for small parameters.

\begin{theorem}\label{thm:main}
There is an absolute constant $C$ with the following property. Let $(\phi_i)_{i=1}^n$ be an orthonormal system satisfying \eqref{eq:bounded-system}, suppose that $2\leq k<n$, and define
\begin{equation}\label{eq:Mk}
M_k=L\sqrt{\frac{n}{k}\log(2k)}.
\end{equation}
Then there is a set $I\subset[n]$ with $|I|\geq n-k$ such that
\begin{equation}\label{eq:main-bound}
\|f\|_2\leq CM_k\sqrt{\log(e+M_k)}\|f\|_1
\end{equation}
for every $f\in E_I$. If $n\geq2$ is even, there is a set $I\subset[n]$ with
\begin{equation}\label{eq:half-size}
\left||I|-\frac{n}{2}\right|\leq C\sqrt n
\end{equation}
such that both $E_I$ and $E_{[n]\setminus I}$ satisfy
\begin{equation}\label{eq:half-bound}
\|f\|_2\leq
CL\sqrt{\log(2n)\log(e+L\sqrt{\log(2n)})}\|f\|_1.
\end{equation}
\end{theorem}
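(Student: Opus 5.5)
We will prove the proportional statement by a random selection argument in the style of Guédon--Mendelson--Pajor--Tomczak-Jaegermann, replacing their chaining over the $L_1$ ball with an entropic description of $L_1$ functions. Let $\delta_i$ be independent selectors with $\mathbb P(\delta_i=1)=1-\frac{k}{2n}$, and set $I=\{i:\delta_i=1\}$. Then $|I|\geq n-k$ with probability at least $\frac12$ by Chernoff. By homogeneity and a standard duality/extrapolation step (as in \cite{GMPT2008}), it suffices to show
$$
\mathbb E\sup_{f\in B}\Bigl|\sum_i (\delta_i-\bar\delta)|\langle f,\phi_i\rangle|^2\Bigr|\leq \tfrac12 \sup_{f\in B}\|f\|_2^2,
$$
or an equivalent statement controlling $\|P_{I^c}f\|_2$ uniformly. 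Here $B$ is the set of $f\in E$ with $\|f\|_1\leq1$ and $\|f\|_2\leq R$, and $R=CM_k\sqrt{\log(e+M_k)}$. If $\|f\|_2\leq R$ forces $\|P_{I^c}f\|_2\leq\frac12 R$ on $B$, then iterating (a fixed-point argument in $R$) gives \eqref{eq:main-bound} on $E_I$. After symmetrization the quantity to bound is the conditional Rademacher average $\mathbb E_\varepsilon\sup_{f\in B}|\sum_{i\in J}\varepsilon_i|\hat f(i)|^2|$ over the random set $J$ of size about $k$.

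The key step is a finite description of the coefficient squares $(|\hat f(i)|^2)_i$ for $f\in B$. We write $f=\int \cdot$ against the probability density $|f|/\|f\|_1$ and view $\hat f(i)=\int f\bar\phi_i\,d\mu$ as an expectation under the measure $\nu_f=|f|\,d\mu$, whose relative entropy with respect to $\mu$ is controlled by $\log(\|f\|_2^2)\lesssim\log R$ via Jensen. We then run entropic mirror descent: we maintain a Gibbs-type predictor measure and process dyadic thresholds $2^{-j}$ for $|\hat f(i)|$ from large to small. Each coefficient exceeding the current threshold but mispredicted forces a mirror-descent update, whose cost is charged to the entropy budget $\log R$. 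Boundedness $\|\phi_i\|_\infty\le L$ controls the step sizes. Summing over scales yields a single weighted bound: the number $N_j$ of corrections at scale $2^{-j}$ satisfies $\sum_j N_j 2^{-2j}/L^2\lesssim \log(e+R)$. This gives a description of each $f$, up to error, by a sequence of at most $N_j$ indices per scale, so the number of approximating patterns at scale $j$ is at most $\binom{n}{N_j}$.

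With these descriptions, we bound the Rademacher sum by a union bound per scale. For a fixed pattern, the Hoeffding tail of $\sum_{i\in J}\varepsilon_i a_i^2$ at scale $2^{-j}$ involves $(\sum_{J}a_i^4)^{1/2}\leq 2^{-j}\|f\|_2$. The entropy of the patterns at that scale is $N_j\log n$, but we restrict to $J$, which improves this to $N_j\log(2k)$ after conditioning on $|J|\approx k$. Combining these gives
$$
\mathbb E_\varepsilon\sup_{B}\Bigl|\sum_J\varepsilon_i|\hat f(i)|^2\Bigr|\lesssim L\sqrt{\tfrac{k}{n}}\cdot\sqrt{\log(2k)}\sqrt{\log(e+R)}\,R\cdot\sqrt{\tfrac{n}{k}}\,\tfrac{1}{\sqrt{n/k}}.
$$
With the scaling of the normalization this gives $\lesssim (M_k\sqrt{\log(e+R)}/R)R^2$, which is at most $\frac14R^2$ for our choice of $R$, since $\log(e+R)\approx\log(e+M_k)$. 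The \emph{main obstacle} is this step: making the weighted correction bound uniform over scales without losing an extra $\log$ per scale. Processing thresholds from large to small, so that one entropy budget pays for all scales simultaneously, is exactly what should save the exponent from $\frac52$ to $\frac12$. I would first try a potential-function argument, where the KL divergence to $\nu_f$ drops by $\gtrsim 2^{-2j}/L^2$ per correction.

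For the half partition we take $\delta_i$ with mean $\frac12$, so that $k=n/2$ and $|I|-\frac n2=O(\sqrt n)$ with positive probability. We apply the same estimate to both $I$ and its complement, each of which is a Bernoulli$(\frac12)$ selection. A union bound over the two events, each failing with probability at most $\frac14$, gives \eqref{eq:half-bound}, since $M_{n/2}\approx L\sqrt{\log(2n)}$.
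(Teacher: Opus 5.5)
Your overall architecture matches the paper's: an entropic representation of the $L_1$ ball, one never-restarted mirror-descent predictor run from large to small thresholds, a weighted correction bound, a conditional union bound for Rademacher sums, and symmetrization. Using Bernoulli selectors instead of i.i.d.\ uniform indices is harmless. However, several steps fail as written.

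\emph{The reduction has the wrong normalization and direction.} You need the \emph{removed} energy $\sum_{i\notin I}|a_i(f)|^2$ to stay positive on the whole sphere $\{\|f\|_1\le1,\ \|f\|_2=R\}$. That means the deviation must be below its mean $\frac{k}{2n}R^2$, not below $\frac12R^2$. Then any $f\in E_I$ with $\|f\|_2\ge R\|f\|_1$ rescales onto that sphere with zero removed energy, a contradiction, and no iteration is needed (Lemma~\ref{lem:radius}). An upper bound $\|P_{I^c}f\|_2\le\frac12R$ points the wrong way: every $f\in E_I$ satisfies it trivially. Your closing computation drops the same factor $\frac kn$ when you rewrite $L\sqrt{k/n}\sqrt{\log(2k)\log(e+R)}\,R$ as $(M_k\sqrt{\log(e+R)}/R)R^2$. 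The condition on $R$ comes out right only because the two omissions cancel.

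\emph{The representation does not give fixed experts.} $\hat f(i)=\int\frac{f}{|f|}\overline{\phi_i}\,|f|\,d\mu$ is not the integral of an $f$-independent function, so no decoder can replay your predictor. The paper uses fixed experts $\sqrt2\zeta\overline{\phi_i}$ on $\Omega\times\{\pm1,\pm i\}$ (Lemma~\ref{lem:phases}). It also adds an extra atom carrying the mass $1-\|f\|_1$, because normalizing by $\|f\|_1$ only gives the entropy bound $\log(\|f\|_2^2/\|f\|_1^2)$, which $\log R$ does not control.

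\emph{Dyadic thresholds without a boundary zone fail twice.} A mistake on a coefficient just past a threshold forces no definite entropy drop. And squares known only up to a constant factor leave a replacement error of the order of the main term $\frac kn R^2$, which cannot be absorbed. You need the $(1+\eta)$-grid with margins $4\eta t_j$ of Lemma~\ref{lem:quantization}.

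The central gap is the union bound, the step you yourself call the main obstacle. Since the predictor is never restarted, which coordinates are classified at scale $j$ depends on every correction made at the scales above $j$. So the number of level-$j$ patterns is governed by $\sum_{r\ge j}N_r\log k$. The count $\binom{n}{N_j}$ is wrong, and a union bound per scale at cost $N_j\log k$ does not cover the patterns that actually occur.

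The paper handles this in three steps.
\begin{enumerate}
\item Each square is decoded from the largest threshold with a nonzero label, so the band at level $j$ is a function of the histories at levels $j,\dots,H$ only (Lemma~\ref{lem:tail}).
\item That band is charged the cumulative cost $q_j$ of all these histories. Writing $c_r\le\log2+N_r\log(6k)$ for the cost of the level-$r$ history and reversing the order of summation gives $\sum_jt_j^2\sum_{r\ge j}c_r=\sum_rc_r\sum_{j\le r}t_j^2\le C_\eta\sum_rt_r^2c_r$. The weighted total is therefore still $C_\eta L^2\log(2k)\log(e+\rho)$ (Proposition~\ref{prop:weighted-cost}).
\item Cauchy--Schwarz over the bands gives $C\sqrt{K_\rho V_\rho}$, where $V_\rho$ is the supremum of the decoded sampled energy. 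Because the sampled energy is itself the random quantity being estimated, it must be controlled through a self-bounding inequality for $\mathcal D_\rho$ (Proposition~\ref{prop:empirical}). Bounding it by $\|f\|_2$, as your Hoeffding step does, loses the factor $\sqrt{k/n}$.
\end{enumerate}

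This is what the large-to-small order actually buys. A single entropy budget only requires that the predictor not be restarted. The order is what ensures that each band depends only on levels $r\ge j$, whose own weights $t_r^2$ dominate the weights $t_j^2$, $j\le r$, at which their costs are charged.
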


The reverse inequality $\|f\|_1\leq\|f\|_2$ follows from the Cauchy--Schwarz inequality because $\mu(\Omega)=1$, so the theorem provides an equivalence of the two norms on each selected span. If $L$ is bounded by an absolute constant and $\alpha n\leq k<n$ for a fixed $0<\alpha<1$, the factor in \eqref{eq:main-bound} is bounded by
$$
C_\alpha\sqrt{\log n\log\log n}
$$
for sufficiently large $n$, whereas the corresponding bound from \eqref{eq:old-factor} is $C_\alpha\sqrt{\log n}(\log\log n)^{\frac{5}{2}}$. The improvement concerns this specific comparison, and we do not claim that the remaining factor $\sqrt{\log\log n}$ is necessary.

These norm comparisons also have a direct interpretation in signal recovery. Iosevich and Mayeli \cite{IM2025} use restriction estimates and Bourgain's $\Lambda_q$ theorem to obtain uncertainty principles and recovery from incomplete Fourier data, and Iosevich, Kashin, Limonova, and Mayeli \cite{IKLM2024} use large subsystems of bounded orthonormal bases to obtain uniform sparse recovery by $\ell^1$ minimization. In our setting $E_I$ is the kernel of the coefficient measurements on $[n]\setminus I$. The improved norm comparison limits how much $L_1$ mass a vector in this kernel can place on a small support, which leads to exact recovery of sparse signals, and Section~\ref{sec:recovery} makes this implication explicit.

\subsection{The third paper in the descent program}

This is the third paper in the program of Burstein, Iosevich, and Krause in which descent is used to prove uniform estimates in harmonic analysis. The first paper \cite{BIK1} contains a greedy proof of Bourgain's $\Lambda(p)$ selection theorem with prescribed polynomial failure bounds. For $N\geq2$ pairwise orthogonal functions bounded by one, it retains $\lceil N^{\frac{2}{p}}\rceil$ functions with success probability at least $1-N^{-A}$, for any fixed $p>2$ and $A>0$, the norm constant depending on $p$ and $A$ and the same subsystem working for all complex coefficient vectors, and its proof uses a decreasing convex potential to control the corrections at all accuracy levels together.

The second paper \cite{BIK2} uses entropic mirror descent to prove improved restricted isometry estimates for sampled Fourier and Hadamard matrices. It controls the squared norm of every sparse vector on a common event, with polynomial failure bounds, and obtains recovery results from the resulting restricted isometry property. The present paper treats the comparison of the $L_1$ and $L_2$ norms on large coordinate spans of a bounded orthonormal system.

The companion papers supply the common method and its earlier applications, but we prove all the estimates needed here, including the descent and probability estimates, and in particular the main theorem below does not assume a selection or restricted isometry theorem from either companion paper.

There is a common idea behind these problems. If an approximation fails a test, that failure should identify a direction in which a nonnegative potential decreases, and a lower bound for the decrease limits how often failure can occur. When each correction has a finite description, this deterministic bound becomes a bound for the number of descriptions which must be considered in a probabilistic argument.

Here the approximation is a probability measure $P$, and the potential is the relative entropy $D(Q_f\|P)$, defined in Section~\ref{sec:representation}, where $Q_f$ represents the target function. The update is the exponential-weights form of entropic mirror descent, and we prove directly that a mistaken prediction at threshold $t$ decreases the potential by at least
$$
c\frac{\eta^2t^2}{L^2},
$$
where $\eta>0$ is a fixed accuracy parameter. Summing these decreases over all the thresholds, we obtain the weighted correction bound which drives the proof. We also verify the variational identity defining mirror descent, so that the role of gradient descent is expressed by an exact formula rather than only by an analogy. General background on entropic mirror descent may be found in the paper of Beck and Teboulle \cite{BeckTeboulle}, and the estimates needed here are proved in full.

The point specific to the present paper is that we work directly with the $L_1$ ball. A function with $\|f\|_1\leq1$ and $\|f\|_2=\rho$ admits a representation whose initial entropy is at most $C\log(e+\rho)$, and this representation replaces the approximation of $L_1$ by $L_p$, $p>1$, in the proof of \cite{GMPT2008}.

\subsection{How the proof is organized}

Let
$$
a_i(f)=\int_\Omega f\overline{\phi_i}d\mu.
$$
We sample $k$ indices $J_1,\ldots,J_k$ independently and uniformly from $[n]$, and for a fixed function $f \in E$, Parseval's identity shows that
$$
\mathbb E\sum_{\ell=1}^k|a_{J_\ell}(f)|^2
=\frac{k}{n}\|f\|_2^2.
$$
The main task is to make this estimate uniform over a sphere inside the $L_1$ ball. Once this is done, a function supported on the unsampled coefficients cannot have a large $L_2$ norm and a small $L_1$ norm, since its sampled energy is zero, whereas the uniform estimate says that this energy must be positive.

To obtain uniformity, we predict whether each real and imaginary coefficient lies above, below, or near a threshold, predictions close to the boundary being allowed to remain undecided. A stable incorrect prediction triggers a descent step, and we retain the same predictor while moving from larger thresholds to smaller ones, so that only the positions and labels of the corrections need to be recorded.

Why do we process the thresholds in this order? The largest threshold at which a coefficient receives a nonzero label determines an approximation to its square, and consequently the band at a given threshold can be reconstructed from the histories at that threshold and at all larger thresholds, the lower thresholds being irrelevant to that band. This observation allows the cost of each history to be summed with geometric weights. After conditioning on the sampled indices, a union bound over these histories controls a Rademacher process, and a final symmetrization argument produces the uniform empirical estimate. Figure~\ref{fig:outline} shows how these steps depend on one another.

\begin{figure}[htbp]
\centering
\begin{tikzpicture}[font=\small,>={Stealth},node distance=0.38cm,
box/.style={draw,rounded corners,align=center,minimum width=12.6cm,inner sep=4pt}]
\node[box] (a) {The representing measure $Q_f$ of Lemma~\ref{lem:comparator}, with $D(Q_f\|\nu)\leq C\log(e+\rho)$};
\node[box,below=of a] (b) {The descent bound of Proposition~\ref{prop:mistakes}, $\sum_{\sigma,j}t_j^2d_j^{(\sigma)}(f)\leq C_\eta L^2\log(e+\rho)$};
\node[box,below=of b] (c) {Decoded squares and bands, Lemmas~\ref{lem:reconstruction}--\ref{lem:tail},\\each band depending only on the histories at its own and higher levels};
\node[box,below=of c] (d) {The weighted cost of Proposition~\ref{prop:weighted-cost}, $\sum_{\sigma,j}t_j^2q_j^{(\sigma)}(f)\leq K_\rho$};
\node[box,below=of d] (e) {The conditional Rademacher bound $C\sqrt{K_\rho V_\rho}$ of Proposition~\ref{prop:rademacher}};
\node[box,below=of e] (f) {Symmetrization and the estimate $\mathcal D_\rho\leq\epsilon\delta\rho^2$ of Proposition~\ref{prop:empirical}};
\node[box,below=of f] (g) {The choice of the radius and Lemma~\ref{lem:radius}, which prove Theorem~\ref{thm:main}};
\draw[->] (a)--(b);
\draw[->] (b)--(c);
\draw[->] (c)--(d);
\draw[->] (d)--(e);
\draw[->] (e)--(f);
\draw[->] (f)--(g);
\end{tikzpicture}
\caption{The structure of the proof. The entropy of the representing measure pays for the corrections, the histories of the corrections determine approximate coefficient squares, and their weighted cost controls the Rademacher process to which symmetrization reduces the empirical second moment.}
\label{fig:outline}
\end{figure}

The construction yields a successful sample with probability at least $\frac{97}{100}$ for the radius used below. Repeated samples are permitted, and the retained subsystem is the complement of the set of distinct sampled indices. The proof provides an existence theorem and a probability statement for this sampling model, but it does not provide an efficient deterministic procedure for finding the subsystem, or a probability estimate for a uniformly chosen set of prescribed cardinality.

\section{From sampled coefficients to a norm comparison}

We first isolate the elementary geometric step at the end of the argument. For every $f\in E$, orthonormality and \eqref{eq:bounded-system} show that
\begin{equation}\label{eq:coefficient-bounds}
\sum_{i=1}^n|a_i(f)|^2=\|f\|_2^2
\text{ and }
|a_i(f)|\leq L\|f\|_1,
\end{equation}
and in particular
\begin{equation}\label{eq:trivial-bound}
\|f\|_2\leq L\sqrt n\|f\|_1.
\end{equation}
This handles the range of parameters in which the asserted bound is larger than the elementary one.

For independent uniform indices $J_1,\ldots,J_k$ in $[n]$, we put
\begin{equation}\label{eq:sampling-map}
\Gamma f=(a_{J_1}(f),\ldots,a_{J_k}(f))
\end{equation}
and $\delta=\frac{k}{n}$, and we write $J=(J_1,\ldots,J_k)$ for the whole sampled sequence. The norm on the range of $\Gamma$ is the usual unnormalized Euclidean norm, so that
\begin{equation}\label{eq:isotropy}
\mathbb E\|\Gamma f\|_2^2=\delta\|f\|_2^2.
\end{equation}
For $\rho>0$ we define
\begin{equation}\label{eq:sphere}
\mathcal F_\rho=\{f\in E:\|f\|_1\leq1, \|f\|_2=\rho\}.
\end{equation}
If this set is empty, every function in the $L_1$ unit ball of $E$ has $L_2$ norm strictly less than $\rho$, since if $\|f\|_1\leq1$ and $\|f\|_2\geq\rho$, then $\frac{\rho}{\|f\|_2}f$ would belong to $\mathcal F_\rho$. In that case the desired norm comparison already holds on all of $E$, and we therefore assume that $\mathcal F_\rho$ is nonempty whenever we take a supremum over it.

\begin{lemma}\label{lem:radius}
Suppose that $0<c_0<1$ and that a realization of the sample satisfies
\begin{equation}\label{eq:sphere-event}
\sup_{f\in\mathcal F_\rho}
|\|\Gamma f\|_2^2-\delta\rho^2|
\leq c_0\delta\rho^2,
\end{equation}
and let $S$ be the set of distinct sampled indices. Then $\|f\|_2\leq\rho\|f\|_1$ for every $f\in E_{[n]\setminus S}$, and if also $(1+c_0)\delta<1$, the same inequality holds for every $f\in E_S$.
\end{lemma}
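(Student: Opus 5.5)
The plan is a contradiction argument that uses homogeneity to move a violating function onto the sphere $\mathcal F_\rho$. We then compare $\|\Gamma g\|_2^2$ with $\delta\rho^2$ directly through Parseval's identity \eqref{eq:coefficient-bounds}.

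\textbf{The degenerate case.} If $\mathcal F_\rho$ is empty, the remark preceding the lemma shows that every $f\in E$ with $\|f\|_1\leq1$ has $\|f\|_2<\rho$. By homogeneity, $\|f\|_2\leq\rho\|f\|_1$ then holds on all of $E$, so both conclusions are immediate. Hence we may assume $\mathcal F_\rho\neq\emptyset$ and that \eqref{eq:sphere-event} is a genuine constraint.

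\textbf{The rescaling step.} Suppose some $f$ in the relevant span has $\|f\|_2>\rho\|f\|_1$. Then $f\neq0$, so $\|f\|_2>0$, and we set
$$
g=\frac{\rho}{\|f\|_2}f .
$$
This $g$ lies in the same coordinate span and satisfies $\|g\|_2=\rho$. It also satisfies
$$
\|g\|_1=\frac{\rho\|f\|_1}{\|f\|_2}<1,
$$
so $g\in\mathcal F_\rho$ and \eqref{eq:sphere-event} applies to $g$.

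\textbf{First claim.} Take $f\in E_{[n]\setminus S}$, so $a_i(g)=0$ for every $i\in S$. Each sampled index $J_\ell$ lies in $S$, hence $\Gamma g=0$. Then \eqref{eq:sphere-event} gives $\delta\rho^2\leq c_0\delta\rho^2$. This is impossible, since $c_0<1$ and $\delta\rho^2>0$.

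\textbf{Second claim.} Take $f\in E_S$, so $a_i(g)=0$ for every $i\notin S$. Writing $m_i\geq1$ for the number of $\ell$ with $J_\ell=i$, we get
$$
\|\Gamma g\|_2^2=\sum_{i\in S}m_i|a_i(g)|^2\geq\sum_{i\in S}|a_i(g)|^2=\|g\|_2^2=\rho^2 .
$$
The upper half of \eqref{eq:sphere-event} gives $\|\Gamma g\|_2^2\leq(1+c_0)\delta\rho^2$. Combining the two yields $\rho^2\leq(1+c_0)\delta\rho^2<\rho^2$ under the hypothesis $(1+c_0)\delta<1$, which is a contradiction.

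\textbf{Main obstacle.} There is no real difficulty here. The only points needing care are the multiplicities caused by repeated samples, which only help in the second claim, and the empty-sphere case, which is dispatched by the preceding remark.
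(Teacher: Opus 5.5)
Your proof is correct and follows the paper's argument essentially verbatim: you rescale a violating $f$ to $g=\frac{\rho}{\|f\|_2}f\in\mathcal F_\rho$, note that $\Gamma g=0$ when $f\in E_{[n]\setminus S}$, and use $\|\Gamma g\|_2^2\geq\sum_{i\in S}|a_i(g)|^2=\rho^2$ when $f\in E_S$. Your separate treatment of $\mathcal F_\rho=\emptyset$ is harmless but unnecessary, since the rescaled $g$ already shows that $\mathcal F_\rho$ is nonempty whenever a violating $f$ exists.
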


\begin{proof}
Since $(\phi_i)_{i=1}^n$ is an orthonormal basis of $E$, every $f\in E$ has the unique expansion $f=\sum_ia_i(f)\phi_i$, so $\Gamma f=0$ precisely when $a_i(f)=0$ for all $i\in S$, and the kernel of $\Gamma$ is $E_{[n]\setminus S}$. Suppose that a nonzero $f$ in this kernel satisfies $\|f\|_2\geq\rho\|f\|_1$. Then
$$
g=\frac{\rho}{\|f\|_2}f
$$
has $\|g\|_2=\rho$ and $\|g\|_1=\frac{\rho\|f\|_1}{\|f\|_2}\leq1$, so it belongs to $\mathcal F_\rho$ and still lies in the kernel, and the expression in \eqref{eq:sphere-event} at $g$ equals $\delta\rho^2$, contradicting $c_0<1$.

Now suppose instead that a nonzero $f\in E_S$ satisfies $\|f\|_2\geq\rho\|f\|_1$, and define $g$ in the same way. Every index in $S$ occurs at least once in the sample, so that
$$
\|\Gamma g\|_2^2
=\sum_{\ell=1}^k|a_{J_\ell}(g)|^2
\geq\sum_{i\in S}|a_i(g)|^2
=\rho^2,
$$
while \eqref{eq:sphere-event} shows that
$$
\|\Gamma g\|_2^2\leq(1+c_0)\delta\rho^2<\rho^2,
$$
and this is again a contradiction. Repeated indices cause no difficulty, since they only increase the sampled energy in the last argument.
\end{proof}

\section{Representing a function by a probability measure}\label{sec:representation}

The descent argument will predict coefficients by averaging bounded functions against a probability measure, so we need to represent the target coefficients in the same way, with a controlled cost for the representing measure. The relevant cost is relative entropy.

\begin{definition}
If $Q$ and $P$ are probability measures and $Q$ is absolutely continuous with respect to $P$, their relative entropy is
$$
D(Q\|P)=\int\log\left(\frac{dQ}{dP}\right)dQ,
$$
where we use the convention $0\log0=0$, and we set $D(Q\|P)=+\infty$ when absolute continuity fails. Relative entropy is nonnegative, as one sees by applying Jensen's inequality to $h\log h$, where $h=\frac{dQ}{dP}$, together with the identity $\int hdP=1$.
\end{definition}

Complex coefficients require a small preliminary observation. Let $\mathcal U=\{1,-1,i,-i\}$. The square with vertices $\sqrt2\mathcal U$ contains the unit circle, and we shall use an explicit form of this fact.

\begin{lemma}\label{lem:phases}
For each complex number $\omega$ with $|\omega|=1$, there are measurably chosen nonnegative numbers $(\lambda_\zeta(\omega))_{\zeta\in\mathcal U}$ such that
\begin{equation}\label{eq:phase-identity}
\sum_{\zeta\in\mathcal U}\lambda_\zeta(\omega)=1
\text{ and }
\sqrt2\sum_{\zeta\in\mathcal U}\lambda_\zeta(\omega)\zeta=\omega.
\end{equation}
\end{lemma}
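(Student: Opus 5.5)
We will write down the weights explicitly. Write $\omega=x+iy$ with $x^2+y^2=1$, and set
$$
\lambda_1=\frac{x^+}{\sqrt2}+\tau,\quad
\lambda_{-1}=\frac{x^-}{\sqrt2}+\tau,\quad
\lambda_i=\frac{y^+}{\sqrt2}+\tau,\quad
\lambda_{-i}=\frac{y^-}{\sqrt2}+\tau,
$$
where $x^\pm=\max(\pm x,0)$ and $y^\pm=\max(\pm y,0)$. The constant $\tau\geq0$ is chosen so that the weights sum to one, namely
$$
\tau=\frac14\left(1-\frac{|x|+|y|}{\sqrt2}\right).
$$
Every weight is then a Borel (indeed continuous) function of $\omega$, so the measurability requirement is automatic.

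The key steps are as follows. First, the weights are nonnegative: each is a sum of nonnegative terms provided $\tau\geq0$, which amounts to $|x|+|y|\leq\sqrt2$. This follows from Cauchy--Schwarz, since
$$
|x|+|y|\leq\sqrt2\sqrt{x^2+y^2}=\sqrt2.
$$
Second, the weights sum to one: their sum is
$$
\frac{x^++x^-+y^++y^-}{\sqrt2}+4\tau=\frac{|x|+|y|}{\sqrt2}+4\tau=1
$$
by the choice of $\tau$. Third, the vector identity in \eqref{eq:phase-identity} holds. The $\tau$ contributions cancel because $\sum_{\zeta\in\mathcal U}\zeta=0$, so
$$
\sqrt2\sum_{\zeta\in\mathcal U}\lambda_\zeta\zeta=(x^+-x^-)+i(y^+-y^-)=x+iy=\omega.
$$

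No step here is a genuine obstacle. The only point needing care is the nonnegativity of $\tau$, which is exactly the geometric statement that the unit circle lies inside the square with vertices $\sqrt2\,\mathcal U$, that is, inside the $\ell^1$ ball of radius $\sqrt2$.
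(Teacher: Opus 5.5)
Your proposal is correct and is the paper's construction: your $\tau$ is the paper's $\frac{r}{4}$ with $r=1-\frac{|x|+|y|}{\sqrt2}$, and nonnegativity, the two identities, and measurability (via continuity) are verified the same way, using the same Cauchy--Schwarz bound $|x|+|y|\leq\sqrt2$. One cosmetic point: $\tau$ later denotes the bottom threshold in the paper, so a different letter would avoid a notational clash.
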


\begin{proof}
Write $\omega=x+iy$, let $x_+=\max(x,0)$ and $x_-=\max(-x,0)$, and define $y_+$ and $y_-$ in the same way. Since $x^2+y^2=1$, the Cauchy--Schwarz inequality shows that $|x|+|y|\leq\sqrt2$, and therefore
$$
r=1-\frac{|x|+|y|}{\sqrt2}
$$
is nonnegative. We set $\lambda_1=\frac{x_+}{\sqrt2}+\frac{r}{4}$, $\lambda_{-1}=\frac{x_-}{\sqrt2}+\frac{r}{4}$, $\lambda_i=\frac{y_+}{\sqrt2}+\frac{r}{4}$, and $\lambda_{-i}=\frac{y_-}{\sqrt2}+\frac{r}{4}$. These numbers sum to one, and subtracting opposite weights we obtain the real and imaginary parts of the second identity in \eqref{eq:phase-identity}. The formulas are continuous in $x$ and $y$, so measurability also follows.
\end{proof}

Let $\star$ be an extra point, distinct from the points of $\Omega\times\mathcal U$, and form the disjoint union
$$
\mathcal E=\{\star\}\sqcup(\Omega\times\mathcal U),
$$
which we equip with the probability measure
\begin{equation}\label{eq:prior}
\nu=\frac{1}{2}\delta_\star+
\frac{1}{8}\sum_{\zeta\in\mathcal U}(\mu\otimes\delta_\zeta),
\end{equation}
where $\delta_e$ denotes the unit point mass at $e$. For each coordinate $i$ we define $U_i(\star)=0$ and
\begin{equation}\label{eq:experts}
U_i(t,\zeta)=\sqrt2\zeta\overline{\phi_i(t)}.
\end{equation}
We call a point of $\mathcal E$ an expert because it supplies a prediction $U_i(e)$ for each coordinate, a terminology which serves only to describe averages of the functions $U_i$ and introduces no additional assumption. We choose bounded representatives of the finitely many functions $\phi_i$, after changing them on a common null set, and consequently
\begin{equation}\label{eq:expert-bound}
|U_i(e)|\leq B
\end{equation}
at every point used below, where $B=\sqrt2L$.

\begin{lemma}\label{lem:comparator}
For every $f\in\mathcal F_\rho$ there is a probability measure $Q_f$ on $\mathcal E$ such that
\begin{equation}\label{eq:comparator-mean}
\int U_idQ_f=a_i(f)
\end{equation}
for $1\leq i\leq n$ and
\begin{equation}\label{eq:entropy-bound}
D(Q_f\|\nu)\leq C\log(e+\rho).
\end{equation}
\end{lemma}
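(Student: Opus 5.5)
The plan is to build $Q_f$ explicitly from the polar decomposition of $f$. The mass of $Q_f$ on $\Omega\times\mathcal U$ will be $|f|\,d\mu$, distributed among the four phases by Lemma~\ref{lem:phases}. The missing mass $1-\|f\|_1$ will go on the point $\star$, where every $U_i$ vanishes. The entropy is then essentially $\int|f|\log|f|\,d\mu$, which we bound by Jensen's inequality using $\|f\|_2=\rho$.

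\textbf{Construction.} Fix $f\in\mathcal F_\rho$ and put $m=\|f\|_1\leq1$. Define a measurable phase $\omega(t)$ with $|\omega(t)|=1$ and $f(t)=|f(t)|\omega(t)$, taking $\omega=1$ where $f=0$. With $\lambda_\zeta$ as in Lemma~\ref{lem:phases}, set
$$
Q_f=(1-m)\delta_\star+\sum_{\zeta\in\mathcal U}\big(|f|\lambda_\zeta(\omega)\,\mu\big)\otimes\delta_\zeta .
$$
This is a probability measure: the weights are nonnegative, and $\sum_\zeta\lambda_\zeta=1$ gives total mass $(1-m)+m=1$.

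\textbf{The coefficient identity \eqref{eq:comparator-mean}.} Since $U_i(\star)=0$, the definition \eqref{eq:experts} and \eqref{eq:phase-identity} give
$$
\int U_i\,dQ_f=\int_\Omega|f|\sqrt2\sum_\zeta\lambda_\zeta(\omega)\zeta\,\overline{\phi_i}\,d\mu=\int_\Omega|f|\,\omega\,\overline{\phi_i}\,d\mu=a_i(f).
$$

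\textbf{Entropy computation.} $Q_f$ is absolutely continuous with respect to $\nu$. Its density is $2(1-m)$ at $\star$ and $8|f(t)|\lambda_\zeta(\omega(t))$ on $\Omega\times\{\zeta\}$. Therefore
$$
D(Q_f\|\nu)=(1-m)\log\big(2(1-m)\big)+\int_\Omega\sum_\zeta|f|\lambda_\zeta\log\big(8|f|\lambda_\zeta\big)\,d\mu .
$$
We bound the terms in turn.
\begin{itemize}
\item The first term is at most $\log2$.
\item Since $\lambda_\zeta\leq1$ gives $\lambda_\zeta\log\lambda_\zeta\leq0$, and $\sum_\zeta\lambda_\zeta=1$, the integral is at most $m\log8+\int|f|\log|f|\,d\mu$.
\end{itemize}

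\textbf{The main step.} It remains to bound $\int|f|\log|f|\,d\mu$ using only $m\leq1$ and $\|f\|_2=\rho$. If $m=0$, then $f=0$ and the term vanishes. Otherwise let $dP=|f|\,d\mu/m$, a probability measure on $\{f\neq0\}$. Concavity of $\log$ (Jensen) gives
$$
\int|f|\log|f|\,d\mu=m\int\log|f|\,dP\leq m\log\int|f|\,dP=m\log\frac{\rho^2}{m}.
$$
Hence
$$
\int|f|\log|f|\,d\mu\leq 2m\log^+\rho+m\log\frac1m\leq 2\log(e+\rho)+e^{-1}.
$$
Here $\log^+\rho=\max(\log\rho,0)$, and we used $m\log(1/m)\leq e^{-1}$ on $(0,1]$.

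\textbf{Conclusion.} Collecting the bounds, $D(Q_f\|\nu)\leq\log2+\log8+e^{-1}+2\log(e+\rho)$. Since $\log(e+\rho)\geq1$, this is at most $C\log(e+\rho)$ for an absolute constant $C$, which is \eqref{eq:entropy-bound}.

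The only delicate points are routine: handling $\{f=0\}$ via the convention $0\log0=0$, and choosing $\omega$ measurably. The Jensen step is the substantive estimate. It is where the $L_1$ ball and the $L_2$ sphere combine to give a logarithmic entropy cost.
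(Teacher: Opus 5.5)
Your proof is correct and follows the paper's argument essentially step for step. It uses the same measure $Q_f$, built from the polar decomposition and Lemma~\ref{lem:phases} with the deficit $1-\|f\|_1$ placed at $\star$, discards the same nonpositive $\lambda_\zeta\log\lambda_\zeta$ terms, and reaches the same Jensen bound $\int|f|\log|f|\,d\mu\leq\|f\|_1\log(\rho^2/\|f\|_1)$; you apply Jensen to $\log|f|$ under $|f|\,d\mu/\|f\|_1$ and the paper to $\log q$ with $q=|f|/\|f\|_1$, which is the same computation, and the integrability you leave implicit is automatic since $f\in E$ is bounded.
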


\begin{proof}
Put $s=\|f\|_1$, so that $0<s\leq1$, since $\rho>0$, and write $f(t)=|f(t)|\omega(t)$, choosing $\omega(t)=1$ where $f(t)=0$. We assign the mass $1-s$ to $\star$, and give $Q_f$ the density $|f(t)|\lambda_\zeta(\omega(t))$ with respect to $\mu$ on the copy of $\Omega$ indexed by $\zeta$. Its total mass is
$$
(1-s)+\int_\Omega|f|\sum_\zeta\lambda_\zeta d\mu=1,
$$
and it is absolutely continuous with respect to $\nu$, with density $2(1-s)$ at $\star$ and $8|f(t)|\lambda_\zeta(\omega(t))$ on each copy of $\Omega$. The second identity in \eqref{eq:phase-identity} shows that
$$
\int U_idQ_f
=\int_\Omega |f(t)|\omega(t)\overline{\phi_i(t)}d\mu(t)
=a_i(f).
$$

It remains to estimate the entropy. Directly from \eqref{eq:prior}, $D(Q_f\|\nu)$ is the sum of
$$
(1-s)\log(2(1-s))+s\log8+\int_\Omega |f|\log|f|d\mu
$$
and the integral
$$
\int_\Omega |f|\sum_{\zeta\in\mathcal U}
\lambda_\zeta(\omega)\log\lambda_\zeta(\omega)d\mu.
$$
Each $\lambda_\zeta$ lies in $[0,1]$, so the last integral is nonpositive, and it is finite because each term $\lambda_\zeta\log\lambda_\zeta$ lies in $[-\frac{1}{e},0]$, while the first two terms of the sum are bounded above by an absolute constant. Thus
\begin{equation}\label{eq:entropy-intermediate}
D(Q_f\|\nu)\leq C+\int_\Omega |f|\log|f|d\mu,
\end{equation}
these integrals being finite because $t\log t\geq-\frac{1}{e}$ for $t\geq0$ and its positive part is bounded by $t^2$.

Let $q=\frac{|f|}{s}$. The measure $qd\mu$ is a probability measure which assigns zero mass to the set where $q=0$, and the integral of $|\log q|$ against it is finite, by the same bounds for $q\log q$. Jensen's inequality for the concave function $\log$, with respect to this measure, shows that
$$
\int_\Omega q\log qd\mu
\leq\log\left(\int_\Omega q^2d\mu\right)
=\log\left(\frac{\rho^2}{s^2}\right),
$$
and consequently
$$
\int_\Omega |f|\log|f|d\mu
=s\log s+s\int_\Omega q\log qd\mu
\leq s\log\left(\frac{\rho^2}{s}\right).
$$
Finally,
$$
s\log\left(\frac{\rho^2}{s}\right)
\leq2\max(\log\rho,0)+\frac{1}{e}
\leq C\log(e+\rho),
$$
and together with \eqref{eq:entropy-intermediate} this proves \eqref{eq:entropy-bound}.
\end{proof}

The logarithm in \eqref{eq:entropy-bound} has a concrete interpretation. A function of $L_1$ norm at most one can be concentrated, but its prescribed $L_2$ norm controls the entropy needed to describe that concentration, and the proof below uses this amount of entropy only once, across all the thresholds.

\section{Entropic descent and the cost of a correction}

We begin with the elementary exponential estimate behind the descent calculation, and we include its proof so that the constants and the direction of the entropy are explicit.

\begin{lemma}\label{lem:exponential}
Let $X$ be a real random variable with $|X|\leq M$. For every real $v$,
$$
\log\mathbb E e^{vX}\leq v\mathbb E X+\frac{v^2M^2}{2}.
$$
\end{lemma}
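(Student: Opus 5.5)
This is Hoeffding's lemma with variance proxy $M^2$ rather than the sharp $M^2/4$. I would prove it through the cumulant generating function $\psi(s)=\log\mathbb E e^{sX}$ for real $s$ and a second-order Taylor expansion. Since $|X|\leq M$, the expectation $\mathbb E e^{sX}$ is finite for every $s$, and differentiation under the expectation is justified by dominated convergence. This makes $\psi$ smooth, with
$$
\psi(0)=0,\qquad \psi'(0)=\mathbb E X.
$$

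The key step is to bound $\psi''$. For each fixed $s$, let $P_s$ be the tilted probability measure with density $e^{sX}/\mathbb E e^{sX}$ with respect to the underlying law. A direct computation gives
$$
\psi''(s)=\mathbb E_{P_s}X^2-\left(\mathbb E_{P_s}X\right)^2=\operatorname{Var}_{P_s}(X).
$$
Under $P_s$ the variable $X$ still takes values in $[-M,M]$, so its variance is at most $\mathbb E_{P_s}X^2\leq M^2$.

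Taylor's theorem with the Lagrange remainder then gives, for some $\xi$ between $0$ and $v$,
$$
\psi(v)=v\,\mathbb E X+\frac{v^2}{2}\psi''(\xi)\leq v\,\mathbb E X+\frac{v^2M^2}{2},
$$
which is the claim.

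No step here is a real obstacle. The only points needing care are justifying the differentiation under the expectation, which follows from boundedness, and checking that the tilted measure is well defined, which holds because $\mathbb E e^{sX}>0$.

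An alternative route avoids calculus: use convexity of the exponential, $e^{vx}\leq\frac{M-x}{2M}e^{-vM}+\frac{M+x}{2M}e^{vM}$ for $|x|\leq M$, and then bound the resulting two-point log-moment generating function. The tilting argument is shorter, and the weaker constant $M^2$ leaves room to spare, so I would use it.
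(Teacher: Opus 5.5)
Your proof is correct and is essentially the paper's argument. The paper also writes $F(v)=\log\mathbb E e^{vX}$, identifies $F''(v)$ as the variance of $X$ under the tilted measure and bounds it by the second moment $M^2$, and then concludes by Taylor's formula, using the integral form of the remainder $v^2\int_0^1(1-t)F''(tv)\,dt$ instead of your Lagrange form.
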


\begin{proof}
Write $F(v)=\log\mathbb E e^{vX}$. Boundedness permits differentiation under the expectation, and
$$
F''(v)=\frac{\mathbb E(X^2e^{vX})}{\mathbb E e^{vX}}
-\left(\frac{\mathbb E(Xe^{vX})}{\mathbb E e^{vX}}\right)^2
$$
is the variance of $X$ under the probability measure with density $\frac{e^{vX}}{\mathbb E e^{vX}}$, which is at most the second moment, and hence at most $M^2$. Since $F(0)=0$ and $F'(0)=\mathbb E X$, Taylor's formula in the form
$$
F(v)=v\mathbb E X+v^2\int_0^1(1-t)F''(tv)dt
$$
proves the assertion for both signs of $v$.
\end{proof}

Suppose that $P$ is a current probability measure, that $h$ is a bounded real function on $\mathcal E$, and that $\lambda>0$. We define its update $P^+$ by
\begin{equation}\label{eq:update}
\frac{dP^+}{dP}(e)=\frac{e^{-\lambda h(e)}}{Z},
\end{equation}
where $Z=\int e^{-\lambda h}dP$. This is the exponential-weights update, and it is also precisely an entropic mirror descent step, since for any probability measure $R$ of finite relative entropy with respect to $P$,
\begin{equation}\label{eq:variational}
\lambda\int hdR+D(R\|P)
=D(R\|P^+)-\log Z.
\end{equation}
To check this identity, we use $\log\frac{dP^+}{dP}=-\lambda h-\log Z$ when expanding $D(R\|P^+)$. The right side is minimized when $R=P^+$, since relative entropy is nonnegative and vanishes there, so the new measure minimizes a linear loss plus an entropy cost for moving away from the previous measure, which is the counterpart of minimizing a linear loss plus a squared Euclidean distance in a projected gradient step.

\begin{lemma}\label{lem:one-step}
Suppose that $|h|\leq M$ and $D(Q\|P)<\infty$. Then the update \eqref{eq:update} satisfies
\begin{equation}\label{eq:entropy-drop}
D(Q\|P^+)-D(Q\|P)
\leq-\lambda\left(\int hdP-\int hdQ\right)
+\frac{\lambda^2M^2}{2}.
\end{equation}
\end{lemma}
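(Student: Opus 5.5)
The plan is to compute the change in entropy exactly and then bound the normalizing constant with Lemma~\ref{lem:exponential}. First I will check that $Q$ is absolutely continuous with respect to $P^+$. The density $\frac{dP^+}{dP}=\frac{e^{-\lambda h}}{Z}$ is bounded above and below by positive constants, because $|h|\leq M$. So $P$ and $P^+$ are mutually absolutely continuous, and $D(Q\|P^+)$ is finite whenever $D(Q\|P)$ is. By the chain rule for Radon--Nikodym derivatives,
$$
\log\frac{dQ}{dP^+}=\log\frac{dQ}{dP}-\log\frac{dP^+}{dP}=\log\frac{dQ}{dP}+\lambda h+\log Z
$$
holds $Q$-almost everywhere. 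Integrating against $Q$ gives the exact identity
$$
D(Q\|P^+)-D(Q\|P)=\lambda\int h\,dQ+\log Z.
$$
This is the identity \eqref{eq:variational} with $R=Q$, rearranged, so I may simply cite it.

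The second step bounds $\log Z$. Let $X=h(e)$, where $e$ is distributed according to $P$, so that $|X|\leq M$. Applying Lemma~\ref{lem:exponential} with $v=-\lambda$ gives
$$
\log Z=\log\mathbb E e^{-\lambda X}\leq-\lambda\int h\,dP+\frac{\lambda^2M^2}{2}.
$$
Substituting this into the identity yields \eqref{eq:entropy-drop} directly.

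The only point needing care is the integrability in the first step. Each term of $\log\frac{dQ}{dP^+}$ must be $Q$-integrable, so that the integral of the sum splits into the sum of the integrals. This holds because $\lambda h+\log Z$ is bounded and $\log\frac{dQ}{dP}$ is quasi-integrable with finite integral $D(Q\|P)$. Its negative part is integrable, since $x\log x\geq-1/e$ gives
$$
\int\left(\log\tfrac{dQ}{dP}\right)_-dQ=\int\tfrac{dQ}{dP}\left(\log\tfrac{dQ}{dP}\right)_-dP\leq\frac{1}{e}.
$$
Finiteness of $D(Q\|P)$ then forces the positive part to be integrable as well. I expect this bookkeeping to be the only obstacle, and it is minor.
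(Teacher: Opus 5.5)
Your proposal is correct and follows the paper's proof essentially step for step. The bounded density gives mutual absolute continuity, integrating the logarithm of the update against $Q$ gives the exact identity $D(Q\|P^+)-D(Q\|P)=\lambda\int h\,dQ+\log Z$, and Lemma~\ref{lem:exponential} bounds $\log Z$ (you take $X=h$, $v=-\lambda$, while the paper applies it to $-h$ with $v=\lambda$). Your check that the negative part of $\log\frac{dQ}{dP}$ is $Q$-integrable is a correct detail that the paper leaves implicit; it is what justifies splitting the integral.
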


\begin{proof}
Because $|h|\leq M$, both $Z$ and $e^{-\lambda h}$ lie between $e^{-\lambda M}$ and $e^{\lambda M}$, and therefore
$$
e^{-2\lambda M}\leq\frac{dP^+}{dP}\leq e^{2\lambda M}.
$$
In particular, $P^+$ and $P$ have the same null sets and $D(Q\|P^+)$ is finite. Taking logarithms in \eqref{eq:update} and integrating against $Q$, we obtain the exact identity
$$
D(Q\|P^+)-D(Q\|P)=\lambda\int hdQ+\log Z,
$$
and Lemma~\ref{lem:exponential}, applied to $-h$ under $P$, bounds $\log Z$ by $-\lambda\int hdP+\frac{\lambda^2M^2}{2}$, which proves \eqref{eq:entropy-drop} after substitution.
\end{proof}

\subsection{Thresholds and stable labels}

Fix $0<\eta<10^{-3}$, whose final value we choose only after specifying the accuracy needed in the empirical estimate. Suppose that $0<\rho\leq L\sqrt n$, and set
\begin{equation}\label{eq:thresholds}
\tau^2=\eta\frac{\rho^2}{n}
\end{equation}
and $t_j=\tau(1+\eta)^j$ for $0\leq j\leq H$, where $H$ is the smallest nonnegative integer such that
\begin{equation}\label{eq:top-threshold}
t_H\geq\frac{L}{1-4\eta}.
\end{equation}
Since $\tau\leq\sqrt\eta L<\frac{L}{1-4\eta}$, we have $H\geq1$, and minimality shows that
$$
\frac{L}{1-4\eta}\leq t_H<\frac{(1+\eta)L}{1-4\eta}.
$$
Moreover, for $0\leq r\leq H$,
$$
\sum_{j=0}^r t_j^2
=t_r^2\sum_{m=0}^r(1+\eta)^{-2m}
\leq\frac{t_r^2}{1-(1+\eta)^{-2}},
$$
and consequently
\begin{equation}\label{eq:geometric-sums}
T=\sum_{j=0}^H t_j^2\leq C_\eta L^2
\text{ and }
\sum_{j=0}^r t_j^2\leq C_\eta t_r^2
\end{equation}
for $0\leq r\leq H$. The constants in these bounds do not depend on the number $H$ of thresholds. The choice of the bottom threshold also ensures that $k\tau^2=\eta\delta\rho^2$, so an error of size $C\tau^2$ at each sampled position adds up to a small multiple of the expected energy at radius $\rho$.

Fix $f\in\mathcal F_\rho$ and the sampled indices for the rest of this section. We run one predictor for the real parts of the sampled coefficients and another for their imaginary parts, as separate deterministic procedures, and we make no independence assumption about the two parts of a coefficient. For $\sigma=1,2$ we put $z_\ell^{(1)}=\operatorname{Re}a_{J_\ell}(f)$ and $z_\ell^{(2)}=\operatorname{Im}a_{J_\ell}(f)$, and the functions predicted by the two procedures are respectively $u_\ell^{(1)}(e)=\operatorname{Re}U_{J_\ell}(e)$ and $u_\ell^{(2)}(e)=\operatorname{Im}U_{J_\ell}(e)$. In either case $|u_\ell^{(\sigma)}|\leq B$ and $\int u_\ell^{(\sigma)}dQ_f=z_\ell^{(\sigma)}$, and we describe one procedure, suppressing the superscript $\sigma$ for the moment.

At position $\ell$ and threshold $t_j$, we assign the target label $c_f(\ell,j)=+$ if $z_\ell\geq(1+4\eta)t_j$, the label $-$ if $z_\ell\leq-(1+4\eta)t_j$, the label $0$ if $|z_\ell|\leq(1-4\eta)t_j$, and the label $*$ in the remaining case, that is, when
\begin{equation}\label{eq:labels}
(1-4\eta)t_j<|z_\ell|<(1+4\eta)t_j.
\end{equation}
The label $*$ marks a boundary region, shown in Figure~\ref{fig:labels}, where we impose no requirement, and the other three labels are called stable. This gap around the threshold is useful because a stable mistake will force a definite change in the predictor, while boundary values can be left unresolved without substantially changing the coefficient square, as Lemma~\ref{lem:quantization} will show.

\begin{figure}[htbp]
\centering
\begin{tikzpicture}[x=1cm,y=1cm,font=\small,>={Stealth}]
 \fill[gray!25] (-5,-0.2) rectangle (-3,0.2);
 \fill[gray!25] (3,-0.2) rectangle (5,0.2);
 \draw[->] (-6.8,0)--(6.8,0) node[right] {$z_\ell$};
 \foreach \x in {-5,-3,3,5} {\draw (\x,0.25)--(\x,-0.25);}
 \draw (0,0.15)--(0,-0.15);
 \node[below] at (0,-0.25) {$0$};
 \node[below] at (3,-0.25) {$(1-4\eta)t_j$};
 \node[below] at (-3,-0.25) {$-(1-4\eta)t_j$};
 \draw[gray] (5,-0.25)--(5,-0.85);
 \draw[gray] (-5,-0.25)--(-5,-0.85);
 \node[below] at (5,-0.85) {$(1+4\eta)t_j$};
 \node[below] at (-5,-0.85) {$-(1+4\eta)t_j$};
 \draw[dashed] (4,0.6)--(4,-0.2);
 \draw[dashed] (-4,0.6)--(-4,-0.2);
 \node[above] at (4,0.6) {$t_j$};
 \node[above] at (-4,0.6) {$-t_j$};
 \node at (3.5,0.45) {$*$};
 \node at (-3.5,0.45) {$*$};
 \node[above] at (-6,0.25) {label $-$};
 \node[above] at (0,0.25) {label $0$};
 \node[above] at (6,0.25) {label $+$};
\end{tikzpicture}
\caption{The four labels at one threshold $t_j$ for a real coefficient $z_\ell$, with $\eta$ exaggerated. No requirement is imposed in the shaded boundary regions, which carry the label $*$, and outside them the stable label leaves a margin of $4\eta t_j$ between the correct score and each incorrect one.}
\label{fig:labels}
\end{figure}

For a current measure $P$, let $m(P)=\int u_\ell dP$, and assign the scores
\begin{equation}\label{eq:scores}
S_+(P)=m(P)\text{, }S_-(P)=-m(P)\text{, and }S_0(P)=t_j.
\end{equation}
The predicted label is a maximizer of these three scores, ties being broken by the fixed priority order $0,+,-$. Under the target measure $Q_f$, a stable true label (namely, $0,+,-$) has a score at least $4\eta t_j$ larger than either incorrect score. For example, if the true label is $+$, then $S_+(Q_f)-S_0(Q_f)=z_\ell-t_j\geq4\eta t_j$ and $S_+(Q_f)-S_-(Q_f)=2z_\ell\geq4\eta t_j$, the calculation for the label $-$ is the same with the signs reversed, and for the label $0$ both differences are at least $t_j-|z_\ell|\geq4\eta t_j$.

\subsection{The update rule and the common descent bound}

We process the thresholds in the order $H,H-1,\ldots,0$, and the sample positions in the order $1,\ldots,k$ within each threshold, starting with $P=\nu$. At a boundary label or a correct stable prediction we leave $P$ unchanged. At a stable mistake, let $c$ be the true label and $\widehat c$ the predicted label, put $\psi_+=u_\ell$, $\psi_-=-u_\ell$, and $\psi_0=0$, define
\begin{equation}\label{eq:loss}
h=t_j(\psi_{\widehat c}-\psi_c),
\end{equation}
and apply \eqref{eq:update} with
\begin{equation}\label{eq:stepsize}
\lambda=\frac{\eta}{4L^2}.
\end{equation}
We emphasize that the predictor is never restarted when the threshold changes. The measure $Q_f$ is used to analyze the updates, but it is not an input to the update rule, which needs only the sampled coordinate, the threshold, the true label, and the current prediction.

Let us check that the constant score $S_0=t_j$ is accounted for by this rule. Write $S_c(P)=\int\psi_cdP+b_c$, where $b_0=t_j$ and $b_+=b_-=0$. The choice made by the predictor means that
$$
\int(\psi_{\widehat c}-\psi_c)dP
\geq b_c-b_{\widehat c},
$$
while the stable margin under $Q_f$ means that
$$
\int(\psi_{\widehat c}-\psi_c)dQ_f
\leq b_c-b_{\widehat c}-4\eta t_j.
$$
Subtracting cancels the constant additive term, and multiplying by $t_j$, we obtain
\begin{equation}\label{eq:loss-gap}
\int hdP-\int hdQ_f\geq4\eta t_j^2.
\end{equation}
Also $|h|\leq2Bt_j$, and Lemma~\ref{lem:one-step} therefore shows that
$$
D(Q_f\|P^+)-D(Q_f\|P)
\leq-4\eta\lambda t_j^2+2\lambda^2B^2t_j^2.
$$
Since $B^2=2L^2$, the choice \eqref{eq:stepsize} makes the coefficient on the right
$$
-4\eta\lambda+2\lambda^2B^2
=-\frac{\eta^2}{L^2}+\frac{\eta^2}{4L^2}
=-\frac{3\eta^2}{4L^2},
$$
so that
\begin{equation}\label{eq:strict-drop}
D(Q_f\|P^+)-D(Q_f\|P)
\leq-\frac{\eta^2}{2L^2}t_j^2.
\end{equation}
Thus a stable mistake forces an actual decrease of the nonnegative potential $D(Q_f\|P)$.

\begin{proposition}\label{prop:mistakes}
Let $d_j^{(\sigma)}(f)$ denote the number of stable mistakes at threshold $t_j$ in the procedure $\sigma$. Then
\begin{equation}\label{eq:weighted-mistakes}
\sum_{\sigma=1}^2\sum_{j=0}^H t_j^2d_j^{(\sigma)}(f)
\leq C_\eta L^2\log(e+\rho)
\end{equation}
for every $f\in\mathcal F_\rho$.
\end{proposition}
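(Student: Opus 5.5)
For each fixed $\sigma\in\{1,2\}$ I would run a telescoping argument on the potential $D(Q_f\|P)$ along the run of procedure $\sigma$, and then add the two bounds.

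Fix $\sigma$ and list the successive measures of the run as $P_0=\nu,P_1,\ldots,P_N$. Here $P_{m+1}\neq P_m$ only at stable mistakes, in the processing order $j=H,\ldots,0$ and $\ell=1,\ldots,k$. At a boundary label or a correct stable prediction the measure does not change, so the potential does not change either.

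At each stable mistake at threshold $t_j$, I would apply \eqref{eq:strict-drop}, which was derived from Lemma~\ref{lem:one-step} and \eqref{eq:loss-gap}. That lemma needs $D(Q_f\|P)<\infty$, so I would check this inductively. Initially $D(Q_f\|\nu)\leq C\log(e+\rho)<\infty$ by Lemma~\ref{lem:comparator}. The proof of Lemma~\ref{lem:one-step} shows that each update keeps $dP^+/dP$ bounded above and below, so finiteness propagates to every later measure. Note also that \eqref{eq:loss-gap} uses only $\int u_\ell^{(\sigma)}dQ_f=z_\ell^{(\sigma)}$ and the stable margin. Both of these hold for every sampled position and threshold, whatever the current $P$ is.

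Summing the drops along the run then gives
\[
\frac{\eta^2}{2L^2}\sum_{j=0}^H t_j^2 d_j^{(\sigma)}(f)\leq D(Q_f\|\nu)-D(Q_f\|P_N)\leq D(Q_f\|\nu)\leq C\log(e+\rho).
\]
The middle inequality uses the nonnegativity of relative entropy.

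Multiplying through by $2L^2/\eta^2$ and adding the cases $\sigma=1,2$ yields \eqref{eq:weighted-mistakes} with $C_\eta=4C\eta^{-2}$.

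The one point that needs care is that the predictor is never restarted across thresholds. Because of this, a single entropy budget pays for all thresholds at once, rather than $H+1$ separate budgets. This is what makes the bound independent of $H$, and it is exactly why the telescoping is done over the entire run rather than threshold by threshold.

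Beyond that, I expect no real obstacle. The only delicate step is confirming that $Q_f$ is one fixed comparator, valid simultaneously for all positions $\ell$ and all thresholds $t_j$. This holds because \eqref{eq:comparator-mean} holds for every coordinate $i$, and hence for every sampled index $J_\ell$.
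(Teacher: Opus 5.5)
Your proposal is correct and follows the paper's proof essentially step for step. You telescope \eqref{eq:strict-drop} over the stable mistakes of one uninterrupted run, bound the initial entropy by Lemma~\ref{lem:comparator}, use nonnegativity of the terminal entropy, propagate finiteness through the bounded density ratio in Lemma~\ref{lem:one-step}, and add the two procedures; your explicit constant $C_\eta=4C\eta^{-2}$ is also right.
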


\begin{proof}
For one procedure we sum \eqref{eq:strict-drop} over all its mistakes, the potential being unchanged at every other query. The initial value is $D(Q_f\|\nu)$ and the terminal value is nonnegative, so
$$
\frac{\eta^2}{2L^2}\sum_{j=0}^H t_j^2d_j(f)
\leq D(Q_f\|\nu)
\leq C\log(e+\rho).
$$
We run the same argument for the other procedure and add the two bounds. The finiteness of all the intermediate entropies follows by applying the bounded density estimate of Lemma~\ref{lem:one-step} successively, starting from Lemma~\ref{lem:comparator}, and there are only $k(H+1)$ queries in each procedure. This is the common descent principle of \cite{BIK1,BIK2} in the form needed here, in which the initial potential is charged only once, even though the procedure passes through all the thresholds.
\end{proof}

The weights $t_j^2$ are the reason why the estimate remains useful at very small thresholds. There can be many small corrections, but their total squared size is bounded, and an estimate for the unweighted number of corrections would discard this information.

\section{Recovering approximate squares from correction histories}

For a function $f\in\mathcal F_\rho$ we record, in the procedure $\sigma$ and at the threshold $t_j$, the set
\begin{equation}\label{eq:records}
C_j^{(\sigma)}(f)=\{(\ell,c_f^{(\sigma)}(\ell,j)):
\text{a stable mistake occurs at }(\ell,j)\}.
\end{equation}
The history contains one of three stable labels at each recorded position, so that if it has $d$ entries, there are at most
\begin{equation}\label{eq:record-count}
\binom{k}{d}3^d
\end{equation}
possible histories. The sampled indices, the sequence of thresholds, the prior, the step size, and the rule for breaking ties are known to both the encoder and the decoder, and they are not part of the history.

The decoder replays the predictor. At a recorded position, it outputs the stored label and makes the update specified by that label and its current prediction, while at an unrecorded position it outputs its own predicted label and makes no update. The decoder does not need to know whether an unrecorded query was a correct stable prediction or a boundary query, since both leave the state of the encoder unchanged.

To see explicitly that no other information about $f$ is hidden in the state, suppose that the recorded updates encountered so far have losses $h_1,\ldots,h_m$. Repeated use of \eqref{eq:update} shows that
\begin{equation}\label{eq:replay-density}
\frac{dP}{d\nu}(e)=
\frac{\exp(-\lambda\sum_{q=1}^m h_q(e))}
{\int\exp(-\lambda\sum_{q=1}^m h_q)d\nu},
\end{equation}
the empty sum corresponding to $P=\nu$. At the next recorded query the decoder can calculate its prediction from this measure, form the next loss from the known coordinate and the stored label, and continue. This prescription is defined for every history, even one which did not arise from a target function, and if a stored label happens to equal the current prediction, its loss is zero, so including these additional histories only enlarges the finite family used in the probability estimate. Figure~\ref{fig:records} shows the order of the queries and a typical collection of histories.

\begin{lemma}\label{lem:reconstruction}
For a history generated by $f$, the decoder and the encoder have the same predictor state before every query, and the decoder therefore reproduces every stable target label correctly.
\end{lemma}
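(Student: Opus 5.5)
The proof is an induction over the queries, taken in the fixed processing order: thresholds $H,H-1,\ldots,0$, and positions $1,\ldots,k$ within each threshold, separately for each procedure $\sigma$. The inductive hypothesis is that, before the $q$-th query, the encoder and the decoder hold the same measure $P$. By \eqref{eq:replay-density}, this amounts to having processed the same list of losses $h_1,\ldots,h_m$. The base case is immediate, since both start from $P=\nu$, which is known to the decoder.

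For the inductive step, fix the query $(\ell,j)$ and the common state $P$. The prediction $\widehat c$ is the maximizer of the scores \eqref{eq:scores}, with the fixed tie-breaking order. It depends only on $P$, on $u_\ell$ (determined by the known index $J_\ell$ and by $\sigma$), and on $t_j$, so both parties compute the same $\widehat c$. There are two cases.

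\textbf{The query is recorded in $C_j^{(\sigma)}(f)$.} Then the encoder made a stable mistake, and the stored label is the true label $c=c_f(\ell,j)$. The decoder outputs $c$, which is correct. It then forms $h=t_j(\psi_{\widehat c}-\psi_c)$ from quantities it knows and applies \eqref{eq:update} with the common step size \eqref{eq:stepsize}. This is exactly the encoder's update, so the states agree afterwards.

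\textbf{The query is not recorded.} Then either the true label is $*$ or the encoder's prediction was a correct stable label. In both cases the encoder leaves $P$ unchanged, and so does the decoder. If the true label is stable, it equals $\widehat c$, which the decoder outputs, so the decoder again reproduces every stable label. At a $*$ query nothing is claimed.

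The only point needing care is that membership in the record is defined precisely by the stable-mistake event, which is exactly the event at which the encoder updates. The record therefore determines the full sequence of updates and nothing else. Well-definedness of each update, that is, finiteness of $Z$, follows from the boundedness $|h|\leq2Bt_j$ noted in Lemma~\ref{lem:one-step}.

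The induction runs through all $k(H+1)$ queries of each procedure, and the two procedures are handled independently. This proves the lemma. No real obstacle is expected; the step requiring attention is the case split on unrecorded queries, where one must use that a correct stable prediction coincides with the true label.
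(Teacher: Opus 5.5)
Your proof is correct and follows essentially the same route as the paper: an induction on the queries in processing order. In the recorded case both sides share the prediction, the stored true label and hence the update; in the unrecorded case neither side updates, and a stable label coincides with the common prediction.
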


\begin{proof}
The encoder and decoder start from the same prior; suppose that their states agree before a query. At a recorded position they have the same prediction, read the same true label, and make the same update, while at an unrecorded position neither makes an update, so the states agree before the next query, and induction proves the first assertion. An unrecorded stable label was correctly predicted by the encoder, and hence also by the decoder, and at a recorded stable label the decoder outputs the stored answer.
\end{proof}

Let $\widehat c^{(\sigma)}(\ell,j)$ be the output of the decoder. For a sample position $\ell$ we look for the largest threshold with a nonzero output, setting $v_\ell^{(\sigma)}(f)=t_{j_*}^2$ if
\begin{equation}\label{eq:quantized-square}
j_*=\max\{j:\widehat c^{(\sigma)}(\ell,j)\in\{+,-\}\}
\end{equation}
exists, and $v_\ell^{(\sigma)}(f)=0$ otherwise, and we put
\begin{equation}\label{eq:complex-square}
v_\ell(f)=v_\ell^{(1)}(f)+v_\ell^{(2)}(f).
\end{equation}
The signs are used to guide the descent, but only the fact that they are nonzero is used to approximate a square.

\begin{lemma}\label{lem:quantization}
There is an absolute constant $C$ such that, for every $f\in\mathcal F_\rho$ and every sample position $\ell$,
\begin{equation}\label{eq:pointwise-error}
|v_\ell(f)-|a_{J_\ell}(f)|^2|
\leq C\eta|a_{J_\ell}(f)|^2+C\tau^2.
\end{equation}
Consequently,
\begin{equation}\label{eq:total-error}
\sum_{\ell=1}^k|v_\ell(f)-|a_{J_\ell}(f)|^2|
\leq C\eta\sum_{\ell=1}^k|a_{J_\ell}(f)|^2+Ck\tau^2.
\end{equation}
\end{lemma}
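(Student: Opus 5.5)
Since $|a_{J_\ell}(f)|^2=(z_\ell^{(1)})^2+(z_\ell^{(2)})^2$ and $v_\ell=v_\ell^{(1)}+v_\ell^{(2)}$, it suffices to prove, for each real procedure $\sigma$ separately, the one-dimensional estimate
$$
|v_\ell^{(\sigma)}(f)-(z_\ell^{(\sigma)})^2|\leq C\eta (z_\ell^{(\sigma)})^2+C\tau^2,
$$
and then add the two bounds. Summing over $\ell$ then gives \eqref{eq:total-error} directly from \eqref{eq:pointwise-error}. So fix $\sigma$ and $\ell$, write $z=z_\ell^{(\sigma)}$, and use Lemma~\ref{lem:reconstruction}. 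At every threshold where the target label is stable, the decoder output equals the target label. At a boundary threshold the output may be arbitrary, but the threshold satisfies $(1-4\eta)t_j<|z|<(1+4\eta)t_j$.

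The key step is to locate $j_*$ relative to $|z|$. First, any $j$ with $t_j\geq|z|/(1-4\eta)$ has stable target label $0$, so its output is $0$. Hence $j_*$, if it exists, satisfies $t_{j_*}<|z|/(1-4\eta)$. This is consistent with the top threshold, since $|z|\leq|a_{J_\ell}(f)|\leq L$ by \eqref{eq:coefficient-bounds} and $\|f\|_1\le1$, and $t_H\geq L/(1-4\eta)$, so the top label is always $0$. Second, suppose $|z|\geq(1+4\eta)t_0=(1+4\eta)\tau$. Let $j_0$ be the largest $j$ with $(1+4\eta)t_j\leq|z|$. This $j_0<H$ exists, and its target label is stable and nonzero, so the output there is $\pm$ and $j_*\geq j_0$. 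Maximality gives $(1+4\eta)t_{j_0}(1+\eta)>|z|$. Combining the two facts,
$$
\frac{|z|}{(1+4\eta)(1+\eta)}<t_{j_*}<\frac{|z|}{1-4\eta},
$$
so $t_{j_*}^2=(1+O(\eta))z^2$ since $\eta<10^{-3}$. That is, $|v^{(\sigma)}_\ell-z^2|\leq C\eta z^2$.

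The remaining case is $|z|<(1+4\eta)\tau$, where $z^2\leq C\tau^2$. If no nonzero output occurs, then $v^{(\sigma)}_\ell=0$ and the error is $z^2\le C\tau^2$. If $j_*$ exists, the first fact still gives $t_{j_*}<|z|/(1-4\eta)\leq C\tau$, so both quantities are $O(\tau^2)$.

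The main care needed is in the first fact. The output at level $j$ can be nonzero only if the target label there is $\pm$ or $*$, and both cases force $t_j<|z|/(1-4\eta)$. In particular, boundary levels where the decoder may output spurious signs cannot push $t_{j_*}$ above this bound. The rest is routine constant bookkeeping with $(1\pm4\eta)^{\pm1}(1+\eta)\le 1+C\eta$.
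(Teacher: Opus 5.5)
Your proposal is correct and follows the paper's proof essentially step for step. You reduce to one real coordinate, use Lemma~\ref{lem:reconstruction} to see that stable labels are decoded correctly, and trap $t_{j_*}$ between $|z|/((1+\eta)(1+4\eta))$ and $|z|/(1-4\eta)$ via the largest threshold with $(1+4\eta)t_j\leq|z|$, treating $|z|<(1+4\eta)\tau$ as a separate case. Your remark that boundary levels cannot raise $t_{j_*}$ above $|z|/(1-4\eta)$ is the same point the paper makes when it says a zero label is forced whenever $t_j\geq|z|/(1-4\eta)$.
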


\begin{proof}
Consider one real target value $z$. Stable reconstruction forces a nonzero decoded label whenever $t_j\leq\frac{|z|}{1+4\eta}$, and forces a zero label whenever $t_j\geq\frac{|z|}{1-4\eta}$, and the top threshold always has a stable zero label because $|z|\leq L$.

Suppose first that some threshold satisfies the first inequality, and let $t_r$ be the largest such threshold. Then $r<H$ and
$$
t_r\leq\frac{|z|}{1+4\eta}<t_{r+1}=(1+\eta)t_r,
$$
so the largest nonzero decoded threshold exists and satisfies
$$
\frac{|z|}{(1+\eta)(1+4\eta)}<t_r\leq t_{j_*}
<\frac{|z|}{1-4\eta}.
$$
For $0<\eta<10^{-3}$, the mean value theorem shows that
$$
\frac{1}{(1+\eta)^2(1+4\eta)^2}\geq1-12\eta
$$
and that
$$
\frac{1}{(1-4\eta)^2}\leq1+12\eta,
$$
so squaring the preceding bounds for the threshold, we find that $|t_{j_*}^2-z^2|\leq12\eta z^2$.

If no threshold satisfies the first inequality, then $|z|<(1+4\eta)\tau$, and any nonzero decoded threshold must still be smaller than $\frac{|z|}{1-4\eta}$, and hence at most $C\tau$, so both the target square and its decoded approximation are at most $C\tau^2$. This proves the one-dimensional estimate in all cases, including the case in which every output is zero. We apply it to the real and imaginary parts and add, and summation over $\ell$ proves \eqref{eq:total-error}.
\end{proof}

For $\sigma=1,2$ and $0\leq j\leq H$, we define the decoded band
\begin{equation}\label{eq:bands}
A_j^{(\sigma)}(f)=\{\ell:v_\ell^{(\sigma)}(f)=t_j^2\}.
\end{equation}
For each fixed $\sigma$ the bands are disjoint, and
\begin{equation}\label{eq:band-expansion}
v_\ell(f)=\sum_{\sigma=1}^2\sum_{j=0}^H
t_j^21_{A_j^{(\sigma)}(f)}(\ell).
\end{equation}

\begin{lemma}\label{lem:tail}
For fixed sampled indices, $\sigma$, and $j$, every choice of histories $C_H^{(\sigma)},C_{H-1}^{(\sigma)},\ldots,C_j^{(\sigma)}$ determines a subset of $[k]$, independently of how the histories at the lower levels are completed, and when the histories come from $f$, this subset is $A_j^{(\sigma)}(f)$.
\end{lemma}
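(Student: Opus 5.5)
The decoder for procedure $\sigma$ processes the thresholds in the order $H,H-1,\ldots,0$. Its output $\widehat c^{(\sigma)}(\ell,i)$ at level $i$ is therefore a function of the histories $C_H^{(\sigma)},\ldots,C_i^{(\sigma)}$ alone. I would prove this first, by induction along the query order. Before the first query at level $i$, the decoder state is the measure \eqref{eq:replay-density}. Its losses $h_q$ come only from recorded queries at levels $H,\ldots,i+1$, and each $h_q$ is computed from the known coordinate $u_\ell^{(\sigma)}$, the known threshold, the stored label, and the current prediction. The current prediction is itself a function of the current state. Within level $i$, the queries at positions $1,\ldots,k$ are handled the same way using $C_i^{(\sigma)}$. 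No query at a level below $i$ occurs before the level $i$ queries are completed, so the lower histories cannot influence any output at levels $\geq i$. This argument needs the decoder to be defined for arbitrary histories, including ones not produced by any $f$, and the paper's remark after \eqref{eq:replay-density} already ensures that.

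Next I define the candidate set directly from the higher outputs:
$$
\widehat A_j=\Bigl\{\ell\in[k]:\widehat c^{(\sigma)}(\ell,j)\in\{+,-\}\text{ and }\widehat c^{(\sigma)}(\ell,i)=0\text{ for all }i\text{ with }j<i\leq H\Bigr\}.
$$
By the first step, $\widehat A_j$ depends only on $C_H^{(\sigma)},\ldots,C_j^{(\sigma)}$ (and on the fixed data: sample, thresholds, prior, step size, and tie rule). It therefore does not depend on how the lower histories are filled in.

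Finally, suppose the histories come from $f$. The outputs are then exactly those used in \eqref{eq:quantized-square}. By that definition, $v_\ell^{(\sigma)}(f)=t_j^2$ holds precisely when $j$ is the largest index with a nonzero output at position $\ell$. That is the same as saying the output at level $j$ is in $\{+,-\}$ and every output at a higher level is $0$. The decoder only ever emits labels in $\{0,+,-\}$, so ``not in $\{+,-\}$'' is the same as ``$=0$''. Hence $\widehat A_j=A_j^{(\sigma)}(f)$ by \eqref{eq:bands}.

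I expect no real obstacle here. The only care needed is in the first step: the causal order of the replay must be stated precisely, so that it is clear the state before level $i$ is built only from records at levels $>i$. Lemma~\ref{lem:reconstruction} is not needed for this lemma itself, since it concerns correctness of the labels rather than which histories determine them.
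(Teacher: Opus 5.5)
Your proposal is correct and follows the paper's proof. You form the same set (a nonzero decoded label at level $j$ and zero at every higher level), use the top-down processing order to see that it depends only on $C_H^{(\sigma)},\ldots,C_j^{(\sigma)}$, and identify it with $A_j^{(\sigma)}(f)$ through the definition of the largest nonzero threshold, while your induction along the query order, the remark that decoded labels lie in $\{0,+,-\}$, and the observation that Lemma~\ref{lem:reconstruction} is not needed make explicit what the paper leaves implicit.
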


\begin{proof}
Replay the decoder from its initial prior through the level $j$, and form the set
$$
\{\ell:\widehat c^{(\sigma)}(\ell,j)\in\{+,-\}\text{ and }
\widehat c^{(\sigma)}(\ell,r)=0\text{ for every }r>j\}.
$$
All these queries are processed before the lower levels, so the set is determined by the given histories, and any completion at the lower levels leaves these outputs unchanged. By the definition of the largest nonzero threshold, this is exactly the band at the level $j$ for any such completion, and in particular, it is $A_j^{(\sigma)}(f)$ for a history generated by $f$.
\end{proof}

This is the only property of the code concerning dependence which the probability argument will need. We do not require a band to be determined by the history at its own level alone, and all the histories at the higher levels are included.

\begin{figure}[htbp]
\centering
\begin{tikzpicture}[x=0.95cm,y=0.72cm,font=\small,>={Stealth}]
 \fill[blue!10] (0.5,1.5) rectangle (8.5,5.5);
 \foreach \y in {0,...,5}
  {\foreach \x in {1,...,8} {\draw[gray] (\x,\y) circle (2pt);}}
 \foreach \x/\y in {2/5,6/5,3/4,1/3,7/3,4/2,5/1,8/1,2/0,6/0,7/0}
  {\fill (\x,\y) circle (3pt);}
 \foreach \y in {0,...,5} {\node[anchor=east] at (0.3,\y) {$t_{\y}$};}
 \foreach \x in {1,...,8} {\node[below] at (\x,-0.35) {$\x$};}
 \node at (4.5,-1.35) {sample position $\ell$};
 \draw[->,thick] (9.3,5.3)--(9.3,-0.3);
 \node[anchor=west,align=left] at (9.45,2.5) {order of\\processing};
 \node at (4.5,6.15) {the histories in the shaded rows determine the band $A_2$};
\end{tikzpicture}
\caption{The order in which the queries are processed, drawn with $H=5$ and $k=8$. The thresholds are processed from the top row to the bottom row, and the positions from left to right within each row. Filled circles mark stable mistakes, whose positions and labels form the histories, and the decoded labels in the shaded rows, and hence the band $A_2$, are determined by the histories at the levels $5$, $4$, $3$, and $2$, whatever happens at the lower levels.}
\label{fig:records}
\end{figure}

\section{The weighted cost of the histories}

We next assign a positive weight to each possible history. The weights play the role of probabilities in a union bound, although their total mass is allowed to be less than one. For a history $C$ with $d$ entries we put
\begin{equation}\label{eq:record-weight}
\pi_j(C)=\frac{2^{-(d+1)}}{\binom{k}{d}3^d},
\end{equation}
the subscript recording the threshold, although the formula is the same at every threshold. By \eqref{eq:record-count},
\begin{equation}\label{eq:kraft}
\sum_C\pi_j(C)\leq\sum_{d=0}^k2^{-(d+1)}\leq1.
\end{equation}
For the tuple $C_{\geq j}=(C_j,\ldots,C_H)$ of histories at the thresholds $j,\ldots,H$, we define
$$
\pi_{\geq j}(C_{\geq j})=\prod_{r=j}^H\pi_r(C_r),
$$
and the sum of these products over all such sequences is also at most one.

We let $w_j=\frac{t_j^2}{T}$ and
\begin{equation}\label{eq:weights-q}
q_j^{(\sigma)}(f)=\log\left(
\frac{8}{w_j\pi_{\geq j}(C_{\geq j}^{(\sigma)}(f))}\right).
\end{equation}
The quantity $q_j^{(\sigma)}$ is the cost assigned to the information which determines the band $A_j^{(\sigma)}$, and it includes the histories at the higher levels because Lemma~\ref{lem:tail} requires them.

\begin{proposition}\label{prop:weighted-cost}
For a sufficiently large constant $C_\eta$, put
\begin{equation}\label{eq:Krho}
K_\rho=C_\eta L^2\log(2k)\log(e+\rho).
\end{equation}
Then $2T\leq K_\rho$, and every $f\in\mathcal F_\rho$ satisfies
\begin{equation}\label{eq:cost-bound}
\sum_{\sigma=1}^2\sum_{j=0}^H t_j^2q_j^{(\sigma)}(f)\leq K_\rho.
\end{equation}
\end{proposition}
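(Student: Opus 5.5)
We expand $q_j^{(\sigma)}$ into its parts and use the geometric sums \eqref{eq:geometric-sums} to exchange the order of summation. By \eqref{eq:weights-q} and the product form of $\pi_{\geq j}$,
$$
q_j^{(\sigma)}(f)=\log 8+\log\frac{1}{w_j}+\sum_{r=j}^H\log\frac{1}{\pi_r(C_r^{(\sigma)}(f))}.
$$
So the left side of \eqref{eq:cost-bound} splits into three sums: a constant term, a term coming from $w_j$, and a history term. The constant term is $2T\log 8\leq C_\eta L^2$.

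\textbf{The history term.} The history term is
$$
\sum_{r=0}^H\log\frac{1}{\pi_r(C_r)}\sum_{j\leq r}t_j^2\leq C_\eta\sum_{r=0}^H t_r^2\log\frac{1}{\pi_r(C_r)}.
$$
Here we interchange the sums and use the second bound in \eqref{eq:geometric-sums}. This is the point where processing from large to small thresholds pays off: each history is charged only with weights at or below its own level, and these weights are summable.

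For a history with $d$ entries, \eqref{eq:record-weight} gives
$$
\log\frac{1}{\pi_r}=(d+1)\log2+\log\binom{k}{d}+d\log3\leq C(d+1)\log(2k),
$$
using $\binom kd\leq k^d$. So the history term is bounded by
$$
C_\eta\log(2k)\Bigl(T+\sum_r t_r^2 d_r^{(\sigma)}(f)\Bigr).
$$
The first piece is at most $C_\eta L^2\log(2k)$. For the second we sum over $\sigma$ and apply Proposition~\ref{prop:mistakes}, which gives $C_\eta L^2\log(2k)\log(e+\rho)$.

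\textbf{The $w_j$ term; this is the main obstacle.} We must bound $\sum_j t_j^2\log(T/t_j^2)$, because $t_0$ can be tiny, of order $\sqrt{\eta}\rho/\sqrt n$. Write $x_j=t_j^2/T$, so that $\sum_j x_j\leq1$ and the quantity is $T\sum_j x_j\log(1/x_j)$. The $x_j$ decrease geometrically going downward from the top: $x_{H-m}\leq C_\eta(1+\eta)^{-2m}$.

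We split according to whether $x_j$ is larger or smaller than a fixed small constant.
\begin{itemize}
\item For small $x_j$, the function $x\log(1/x)$ is increasing, so each term is at most $C_\eta m(1+\eta)^{-2m}$ with $m=H-j$.
\item For the remaining terms, $x_j\geq c_\eta$, so $\log(1/x_j)\leq C_\eta$.
\end{itemize}
Summing over $m$ gives a constant depending only on $\eta$ and independent of $H$. Hence the whole term is at most $C_\eta T\leq C_\eta L^2$. The only care needed is to make this bound uniform in $H$.

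\textbf{Conclusion.} Adding the three contributions, and using $\log(2k)\geq\log4$ and $\log(e+\rho)\geq1$ to absorb the constant terms, gives \eqref{eq:cost-bound} once $C_\eta$ is large enough. The inequality $2T\leq K_\rho$ follows from $T\leq C_\eta L^2$ for the same reason.
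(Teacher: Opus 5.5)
Your proposal is correct and follows essentially the same route as the paper: the same three-way split of $q_j^{(\sigma)}$, the exchange of summation via \eqref{eq:geometric-sums}, the bound $\log(1/\pi_r)\le C(d+1)\log(2k)$, and Proposition~\ref{prop:mistakes}. The only variation is in the $w_j$ term. There the paper skips your large/small case split and uses the exact formula $w_{H-m}=a^m/\sum_{r=0}^H a^r$ with $a=(1+\eta)^{-2}$, which directly gives $\log(1/w_{H-m})\le m\log(1/a)+\log(1/(1-a))$. This also avoids the small bookkeeping slip at $m=0$, where your bound $C_\eta m(1+\eta)^{-2m}$ should read $C_\eta(m+1)(1+\eta)^{-2m}$.
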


\begin{proof}
For a history with $d$ entries,
$$
\log\left(\frac{1}{\pi_j(C)}\right)
=(d+1)\log2+\log\binom{k}{d}+d\log3
\leq\log2+d\log(6k),
$$
where we used the elementary bound $\binom{k}{d}\leq k^d$, which also holds for $d=0$.

Fix one procedure. Since all the sums are finite, we may reverse their order, and we find that
$$
\sum_{j=0}^H t_j^2
\log\left(\frac{1}{\pi_{\geq j}(C_{\geq j})}\right)
=\sum_{j=0}^H t_j^2\sum_{r=j}^H
\log\left(\frac{1}{\pi_r(C_r)}\right)
$$
equals
$$
\sum_{r=0}^H\log\left(\frac{1}{\pi_r(C_r)}\right)
\sum_{j=0}^r t_j^2,
$$
which, by \eqref{eq:geometric-sums}, is at most
$$
C_\eta\left(T+\log(2k)\sum_{r=0}^H t_r^2d_r\right).
$$
This calculation is where the order of processing pays off. A history at the level $r$ is charged at every level $j\leq r$, but the sum of those charges is at most a constant times its own weight $t_r^2$.

We must also bound the contribution of $w_j$ in \eqref{eq:weights-q}. Put $a=(1+\eta)^{-2}$, which lies in $(0,1)$. Writing $j=H-m$, we have
$$
w_{H-m}=\frac{a^m}{\sum_{r=0}^H a^r}
$$
for $0\leq m\leq H$, where the denominator lies between $1$ and $\frac{1}{1-a}$, and moreover
$$
\sum_{m=0}^H mw_{H-m}\leq\sum_{m=0}^\infty ma^m
=\frac{a}{(1-a)^2}.
$$
Consequently,
$$
\sum_{j=0}^H w_j\log\left(\frac{1}{w_j}\right)
\leq\log\left(\frac{1}{1-a}\right)
+\log\left(\frac{1}{a}\right)\frac{a}{(1-a)^2}
\leq C_\eta,
$$
and multiplying by $T$, we obtain a bound for $\sum_jt_j^2\log(\frac{1}{w_j})$. Adding the two procedures and the terms $t_j^2\log8$, and using Proposition~\ref{prop:mistakes}, we find that
$$
\sum_{\sigma,j}t_j^2q_j^{(\sigma)}
\leq C_\eta(T+L^2\log(2k)\log(e+\rho)).
$$
Since $T\leq C_\eta L^2$, this is \eqref{eq:cost-bound}, and enlarging the same constant in \eqref{eq:Krho} also ensures that $2T\leq K_\rho$.
\end{proof}

\section{A conditional Rademacher estimate}

Let $\varepsilon_1,\ldots,\varepsilon_k$ be independent random variables taking the values $1$ and $-1$ with equal probability, independently of the sampled indices, the familiar Rademacher variables. For the moment the sampled indices are still fixed, and we set
\begin{equation}\label{eq:Vrho}
V_\rho=\sup_{f\in\mathcal F_\rho}\sum_{\ell=1}^k v_\ell(f).
\end{equation}

\begin{proposition}\label{prop:rademacher}
Conditionally on $J_1,\ldots,J_k$,
\begin{equation}\label{eq:coded-rademacher}
\mathbb E_\varepsilon\sup_{f\in\mathcal F_\rho}
\left|\sum_{\ell=1}^k\varepsilon_\ell v_\ell(f)\right|
\leq C\sqrt{K_\rho V_\rho}.
\end{equation}
\end{proposition}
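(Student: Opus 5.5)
The plan is a chaining-free union bound over the finite family of histories, organized by the band expansion \eqref{eq:band-expansion}. For every $f\in\mathcal F_\rho$ we write
$$
\sum_{\ell=1}^k\varepsilon_\ell v_\ell(f)=\sum_{\sigma=1}^2\sum_{j=0}^H t_j^2\sum_{\ell\in A_j^{(\sigma)}(f)}\varepsilon_\ell .
$$
By Lemma~\ref{lem:tail}, each inner sum depends on $f$ only through the tuple $C_{\geq j}^{(\sigma)}(f)$. So we replace the supremum over $f$ by a control that holds simultaneously for every $\sigma$, every $j$, and every tuple $C_{\geq j}$ of histories, whether or not it was generated by some $f$. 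Write $A_j(C_{\geq j})$ for the subset of $[k]$ determined by the tuple, and put
$$
q_j(C_{\geq j})=\log\left(\frac{8}{w_j\pi_{\geq j}(C_{\geq j})}\right),
$$
so that $q_j(C_{\geq j}^{(\sigma)}(f))=q_j^{(\sigma)}(f)$. This is a countable, indeed finite, family, so there are no measurability issues, and the supremum over $f$ is a maximum over finitely many expressions.

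For a fixed set $A\subset[k]$, Hoeffding's inequality gives
$$
\mathbb P_\varepsilon\left(\left|\sum_{\ell\in A}\varepsilon_\ell\right|>s\right)\leq 2e^{-s^2/(2|A|)},
$$
and the bound is trivial when $A$ is empty. For a parameter $u\geq0$, let $\mathcal G_u$ be the event on which
$$
\left|\sum_{\ell\in A_j(C_{\geq j})}\varepsilon_\ell\right|\leq\sqrt{2|A_j(C_{\geq j})|\,(q_j(C_{\geq j})+u)}
$$
for all $\sigma$, $j$, and $C_{\geq j}$. Since $e^{-q_j}=\frac{w_j\pi_{\geq j}}{8}$, the union bound gives
$$
\mathbb P(\mathcal G_u^c)\leq\sum_{\sigma,j,C_{\geq j}}2e^{-u}\frac{w_j\pi_{\geq j}(C_{\geq j})}{8}\leq\frac{e^{-u}}{2}.
$$
Here we use \eqref{eq:kraft} for the product weights, $\sum_j w_j=1$, and the two values of $\sigma$.

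On $\mathcal G_u$, for every $f$, we apply the Cauchy--Schwarz inequality with weights $t_j^2$:
$$
\left|\sum_\ell\varepsilon_\ell v_\ell(f)\right|
\leq\sqrt{2}\left(\sum_{\sigma,j}t_j^2|A_j^{(\sigma)}(f)|\right)^{\frac12}\left(\sum_{\sigma,j}t_j^2\bigl(q_j^{(\sigma)}(f)+u\bigr)\right)^{\frac12}.
$$
By \eqref{eq:band-expansion}, the first factor is $\sum_\ell v_\ell(f)\leq V_\rho$. By Proposition~\ref{prop:weighted-cost}, the second factor is at most $K_\rho+2Tu\leq K_\rho(1+u)$, using $2T\leq K_\rho$. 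Hence
$$
\mathbb P_\varepsilon\left(\sup_f\left|\sum_\ell\varepsilon_\ell v_\ell(f)\right|>\sqrt{2K_\rho V_\rho(1+u)}\right)\leq\frac{e^{-u}}{2},
$$
and integrating this tail in $u$ yields \eqref{eq:coded-rademacher}.

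The main point requiring care is that the union bound may only be taken over objects that actually determine the bands. This is exactly why Lemma~\ref{lem:tail} and the cumulative weights $\pi_{\geq j}$ are used, rather than the histories at a single level. One should also check that the random selection of the set $A$ through $f$ is harmless. It is harmless because the bound holds simultaneously for every tuple before $f$ is chosen, so it applies in particular to the tuple $C_{\geq j}^{(\sigma)}(f)$.
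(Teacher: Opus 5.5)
Your proposal is correct and follows the paper's proof essentially step for step: the same sub-Gaussian tail with $b=q_j+u$, the same union bound over all tuples $C_{\geq j}$ (justified by Lemma~\ref{lem:tail}) with total weight $\frac{1}{2}e^{-u}$, the same weighted Cauchy--Schwarz step combined with Proposition~\ref{prop:weighted-cost} and $2T\leq K_\rho$, and the same tail integration. The only cosmetic point is that the band $A_j(C_{\geq j})$ determined by a tuple also depends on the procedure $\sigma$, since the replay uses $u_\ell^{(\sigma)}$, so it should carry that index; your event already ranges over $\sigma$, so nothing changes.
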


\begin{proof}
For a fixed subset $A\subset[k]$, independence shows that
$$
\mathbb E_\varepsilon\exp\left(v\sum_{\ell\in A}\varepsilon_\ell\right)
=(\cosh v)^{|A|}\leq\exp\left(\frac{v^2|A|}{2}\right),
$$
the last inequality following from Lemma~\ref{lem:exponential} applied to one sign. If $A$ is nonempty and $a>0$, the exponential Markov inequality shows that
$$
\mathbb P_\varepsilon\left\{\sum_{\ell\in A}\varepsilon_\ell>a\right\}
\leq\exp\left(-va+\frac{v^2|A|}{2}\right)
$$
for every $v>0$. Taking $v=\frac{a}{|A|}$, applying the same argument to the negative sum, and then setting $a=\sqrt{2|A|b}$, we obtain
\begin{equation}\label{eq:sign-tail}
\mathbb P_\varepsilon\left\{
\left|\sum_{\ell\in A}\varepsilon_\ell\right|>
\sqrt{2|A|b}\right\}\leq2e^{-b}
\end{equation}
for every $b>0$, and when $A$ is empty the event is empty, so the inequality still holds.

Fix $u\geq0$. For a procedure $\sigma$, a level $j$, and any possible sequence of histories at the levels $j,\ldots,H$, Lemma~\ref{lem:tail} determines a fixed band $A$, and we apply \eqref{eq:sign-tail} with
$$
b=\log\left(\frac{8}{w_j\pi_{\geq j}(C_{\geq j})}\right)+u.
$$
The sum of the resulting exceptional probabilities, over both procedures, all the levels, and all these sequences of histories, is at most
$$
\sum_{\sigma=1}^2\sum_{j=0}^H
\frac{w_je^{-u}}{4}\sum_{C_{\geq j}}\pi_{\geq j}(C_{\geq j})
\leq\frac{1}{2}e^{-u},
$$
so outside an event of probability at most $\frac{1}{2}e^{-u}$, all the required estimates hold simultaneously, and in particular, they hold for the histories generated by every $f\in\mathcal F_\rho$. At this stage we used no bound on the number of corrections in an arbitrary history, since the total weight bound \eqref{eq:kraft} controls the union over all histories, and the bound involving $K_\rho$ will now be applied only to histories which actually come from a function in $\mathcal F_\rho$.

On this event, \eqref{eq:band-expansion} and the Cauchy--Schwarz inequality show that
$$
\left|\sum_{\ell=1}^k\varepsilon_\ell v_\ell(f)\right|
\leq\sum_{\sigma,j}t_j^2\sqrt{2|A_j^{(\sigma)}(f)|(q_j^{(\sigma)}(f)+u)},
$$
and the right side is at most
$$
\left(2\sum_{\sigma,j}t_j^2|A_j^{(\sigma)}(f)|\right)^{\frac{1}{2}}
\left(\sum_{\sigma,j}t_j^2(q_j^{(\sigma)}(f)+u)\right)^{\frac{1}{2}}
\leq\sqrt{2V_\rho(K_\rho+2uT)},
$$
where in the last step we used the identity
$$
\sum_{\sigma,j}t_j^2|A_j^{(\sigma)}(f)|=\sum_{\ell=1}^k v_\ell(f),
$$
the identity $\sum_{\sigma,j}t_j^2=2T$, and Proposition~\ref{prop:weighted-cost}. The bound is therefore uniform in $f$. No independence between different bands, or between the two procedures, is needed, because the single union bound already covers all of them.

Let $X$ denote the supremum on the left side of \eqref{eq:coded-rademacher}. If $V_\rho=0$, all the $v_\ell(f)$ vanish and the result is immediate. Otherwise, since $2T\leq K_\rho$, the preceding estimate implies that
$$
\mathbb P_\varepsilon\{X>A\sqrt{1+u}\}\leq\frac{1}{2}e^{-u},
$$
where $A=\sqrt{2V_\rho K_\rho}$, and integrating the tail, with the substitution $x=A\sqrt{1+u}$ for $x\geq A$, we find that
$$
\mathbb E_\varepsilon X
\leq A+\frac{A}{4}\int_0^\infty\frac{e^{-u}}{\sqrt{1+u}}du
\leq\frac{5A}{4},
$$
which proves \eqref{eq:coded-rademacher}.
\end{proof}

The conditioning is essential. The histories depend on the sampled indices, but once those indices are fixed, every possible history determines a fixed band and the signs remain independent. We shall apply symmetrization to the original coefficient squares before replacing them by their decoded approximations, and there is therefore no assumption that the code is independent of the sample.

\section{Uniform control of the empirical second moment}

We now return to the randomness of the indices, and define
\begin{equation}\label{eq:Drho}
\mathcal D_\rho=
\mathbb E_J\sup_{f\in\mathcal F_\rho}
\left|\sum_{\ell=1}^k|a_{J_\ell}(f)|^2-\delta\rho^2\right|.
\end{equation}
The following proposition is the quantitative estimate needed for the main theorem.

\begin{proposition}\label{prop:empirical}
For every $0<\epsilon<1$ there is a constant $C_\epsilon$ such that, if $\mathcal F_\rho$ is nonempty, $0<\rho\leq L\sqrt n$, and
\begin{equation}\label{eq:fixed-point-condition}
\rho^2\geq C_\epsilon\frac{n}{k}L^2\log(2k)\log(e+\rho),
\end{equation}
then
\begin{equation}\label{eq:empirical-small}
\mathcal D_\rho\leq\epsilon\delta\rho^2.
\end{equation}
\end{proposition}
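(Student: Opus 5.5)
We combine the standard symmetrization inequality with the quantization estimate of Lemma~\ref{lem:quantization} and the conditional Rademacher bound of Proposition~\ref{prop:rademacher}, and close the argument with a self-bounding inequality for $\mathcal D_\rho$.

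Since $\mathbb E|a_{J_\ell}(f)|^2=\rho^2/n$ for every $f\in\mathcal F_\rho$, with $\delta\rho^2=k\rho^2/n$, the usual symmetrization argument gives
$$
\mathcal D_\rho\leq 2\,\mathbb E_J\mathbb E_\varepsilon\sup_{f\in\mathcal F_\rho}\Bigl|\sum_{\ell=1}^k\varepsilon_\ell|a_{J_\ell}(f)|^2\Bigr|.
$$
To prove it, we introduce an independent copy $J'$, apply Jensen's inequality, and use exchangeability, all of which is valid because the summands are bounded by $L^2$. The symmetrization is performed on the true coefficient squares, as the remark after Proposition~\ref{prop:rademacher} requires.

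With $J$ fixed, we replace $|a_{J_\ell}(f)|^2$ by $v_\ell(f)$. By \eqref{eq:total-error}, the error is at most $C\eta\sum_\ell|a_{J_\ell}(f)|^2+Ck\tau^2$ uniformly in $f$, and $k\tau^2=\eta\delta\rho^2$. Put $W=\sup_{f\in\mathcal F_\rho}\sum_\ell|a_{J_\ell}(f)|^2$. Proposition~\ref{prop:rademacher} then gives
$$
\mathbb E_\varepsilon\sup_f\Bigl|\sum_\ell\varepsilon_\ell|a_{J_\ell}(f)|^2\Bigr|\leq C\sqrt{K_\rho V_\rho}+C\eta W+C\eta\delta\rho^2 .
$$
A second application of \eqref{eq:total-error} yields $V_\rho\leq(1+C\eta)W+C\eta\delta\rho^2\leq CW+C\delta\rho^2$. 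We then take $\mathbb E_J$, use Jensen's inequality in the form $\mathbb E\sqrt{W}\leq\sqrt{\mathbb EW}$, and use $\mathbb E_JW\leq\delta\rho^2+\mathcal D_\rho$. This gives
$$
\mathcal D_\rho\leq C\sqrt{K_\rho(\delta\rho^2+\mathcal D_\rho)}+C\eta(\delta\rho^2+\mathcal D_\rho).
$$
Here $C$ is absolute, while $K_\rho$ depends on $\eta$.

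The main obstacle is that the constant $C_\eta$ in $K_\rho$ grows as $\eta\to0$, so the order of choices matters. We first fix $\eta=\eta(\epsilon)$ so small that $C\eta\leq\epsilon/8$; this is also small enough to absorb the $C\eta\mathcal D_\rho$ term on the left. With $\eta$ fixed, $K_\rho=C_\eta L^2\log(2k)\log(e+\rho)$, and the hypothesis \eqref{eq:fixed-point-condition} with $C_\epsilon$ large reads $K_\rho\leq\theta\delta\rho^2$, where $\theta=\theta(\epsilon)$ is as small as we like. Setting $x=\mathcal D_\rho/(\delta\rho^2)$, we obtain
$$
x(1-\epsilon/8)\leq C\sqrt{\theta(1+x)}+\epsilon/8 .
$$
A priori $x$ is finite, since $|a_i(f)|\leq L$, so this quadratic inequality gives $x\leq\epsilon$ once $\theta$ is small. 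For the case split, if $x\leq1$ the right side is at most $C\sqrt{2\theta}+\epsilon/8$; if $x>1$, then $\sqrt{1+x}\leq\sqrt{2x}$, which contradicts the bound for small $\theta$. Finally, the hypothesis $\rho\leq L\sqrt n$ is needed only to run the threshold construction and Lemma~\ref{lem:quantization}.
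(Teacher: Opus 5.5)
Your proposal is correct and follows the paper's proof essentially step for step. You symmetrize on the true coefficient squares, pass to the decoded squares via Lemma~\ref{lem:quantization}, apply Proposition~\ref{prop:rademacher} with Jensen's inequality, and close the self-bounding inequality by fixing $\eta$ before $C_\epsilon$. The only difference is cosmetic: you normalize by $\delta\rho^2$ and split into the cases $x\le1$ and $x>1$, whereas the paper absorbs the $\mathcal D_\rho$ terms using $A\sqrt{K_\rho\mathcal D_\rho}\le\frac14\mathcal D_\rho+A^2K_\rho$ and then chooses $\kappa$.
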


\begin{proof}
Fix $0<\epsilon<1$. For the moment $\eta$ is any number in $(0,10^{-3})$, and we shall choose it depending only on $\epsilon$ at the end. All the suprema in this proof are over $\mathcal F_\rho$.

We first spell out the symmetrization. Let $J'_1,\ldots,J'_k$ be an independent copy of the sample, write $h_i(f)=|a_i(f)|^2$, and for a fixed original sample set $Z_f=\sum_\ell(h_{J_\ell}(f)-h_{J'_\ell}(f))$. Since $\mathbb E_{J'}\sum_\ell h_{J'_\ell}(f)=\delta\rho^2$,
$$
\sup_f|\mathbb E_{J'}Z_f|
\leq\sup_f\mathbb E_{J'}|Z_f|
\leq\mathbb E_{J'}\sup_f|Z_f|,
$$
and averaging over the original sample, we obtain
$$
\mathcal D_\rho\leq
\mathbb E_{J,J'}\sup_f
\left|\sum_{\ell=1}^k(h_{J_\ell}(f)-h_{J'_\ell}(f))\right|.
$$
Each pair $(J_\ell,J'_\ell)$ has the same distribution when its entries are interchanged, and the pairs are independent, so we may interchange any chosen collection of pairs without changing their joint distribution. Introducing independent signs, we see that the right side equals
$$
\mathbb E_{J,J',\varepsilon}\sup_f\left|\sum_{\ell=1}^k
\varepsilon_\ell(h_{J_\ell}(f)-h_{J'_\ell}(f))\right|,
$$
and the triangle inequality, together with the identical distributions of the two samples, shows that this is at most twice the corresponding expression for one sample. Hence
\begin{equation}\label{eq:symmetrization}
\mathcal D_\rho\leq
2\mathbb E_{J,\varepsilon}\sup_{f\in\mathcal F_\rho}
\left|\sum_{\ell=1}^k\varepsilon_\ell|a_{J_\ell}(f)|^2\right|.
\end{equation}

For a fixed sample we put
$$
M_\rho(J)=\sup_{f\in\mathcal F_\rho}
\sum_{\ell=1}^k|a_{J_\ell}(f)|^2.
$$
For each sample, $M_\rho(J)$ is at most $\delta\rho^2$ plus the supremum of the absolute deviation in \eqref{eq:Drho}, and hence
\begin{equation}\label{eq:M-mean}
\mathbb E_J M_\rho(J)\leq\mathcal D_\rho+\delta\rho^2.
\end{equation}
Lemma~\ref{lem:quantization} shows that, for every sequence of signs,
$$
\sup_f\left|\sum_{\ell=1}^k\varepsilon_\ell|a_{J_\ell}(f)|^2\right|
\leq\sup_f\left|\sum_{\ell=1}^k\varepsilon_\ell v_\ell(f)\right|
+C\eta M_\rho(J)+Ck\tau^2,
$$
and also that
\begin{equation}\label{eq:V-upper}
V_\rho\leq(1+C\eta)M_\rho(J)+Ck\tau^2.
\end{equation}
We apply Proposition~\ref{prop:rademacher} conditionally on the sample and then average, bounding the term with the square root by
$$
\mathbb E_J\sqrt{K_\rho V_\rho}
\leq\sqrt{K_\rho\mathbb E_J V_\rho},
$$
where $K_\rho$ does not depend on the sample. Using $k\tau^2=\eta\delta\rho^2$ and \eqref{eq:M-mean}, we conclude from \eqref{eq:symmetrization} that
\begin{equation}\label{eq:self-bound}
\mathcal D_\rho
\leq A\sqrt{K_\rho(\mathcal D_\rho+\delta\rho^2)}
+A\eta(\mathcal D_\rho+\delta\rho^2),
\end{equation}
where $A$ is an absolute constant, all the dependence on $\eta$ in the first term being contained in $K_\rho$.

We now choose the constants in an order which makes the prescribed accuracy explicit. We take $\eta$ so small that $0<\eta<10^{-3}$, $A\eta\leq\frac{1}{4}$, and $2A\eta\leq\frac{\epsilon}{2}$, a choice which fixes the constant $C_\eta$ in $K_\rho$. Let $b=\delta\rho^2$. Using $\sqrt{x+y}\leq\sqrt x+\sqrt y$ and
$$
A\sqrt{K_\rho\mathcal D_\rho}
\leq\frac{1}{4}\mathcal D_\rho+A^2K_\rho,
$$
we can absorb the two terms containing $\mathcal D_\rho$ on the right side of \eqref{eq:self-bound}, and we obtain
$$
\mathcal D_\rho\leq2A^2K_\rho+2A\sqrt{K_\rho b}+2A\eta b.
$$
We choose $\kappa>0$, depending only on $\epsilon$, so small that
$$
2A^2\kappa+2A\sqrt\kappa\leq\frac{\epsilon}{2}.
$$
If $K_\rho\leq\kappa b$, the last two displays show that $\mathcal D_\rho\leq\epsilon b$, and by the definition of $K_\rho$ this requirement is exactly \eqref{eq:fixed-point-condition}, once $C_\epsilon$ is chosen sufficiently large. This proves the proposition.
\end{proof}

Let us record why the suprema used above are well defined. The space $E$ is finite-dimensional, and the $L_1$ unit ball is closed and, by \eqref{eq:trivial-bound}, bounded in $L_2$, so it is compact, and $\mathcal F_\rho$ is a closed subset of this ball. Coefficient evaluation is continuous, and hence the original suprema are finite and attained. A decoded map need not be continuous, but for a fixed sample its range is finite, because there are only finitely many histories. Finally, the samples of indices and the sequences of signs range over finite sets, so all the expectations in the proof are expectations of well-defined functions on finite probability spaces.

\section{Choosing the radius and proving the selection theorem}

We must solve the inequality in Proposition~\ref{prop:empirical}, which has the familiar feature that the unknown radius appears inside a logarithm as well as outside it, and the next calculation isolates this harmless dependence.

\begin{lemma}\label{lem:logarithm}
For every $A_0>0$ there is a constant $C_0\geq1$ such that, for every $x>0$, the number
$$
R=C_0x\sqrt{\log(e+x)}
$$
satisfies
$$
R^2\geq A_0x^2\log(e+R).
$$
\end{lemma}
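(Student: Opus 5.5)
Since $R^2=C_0^2x^2\log(e+x)$, the claim reduces to
$$
C_0^2\log(e+x)\geq A_0\log(e+R),
$$
so we need an upper bound for $\log(e+R)$ that is a constant multiple of $\log(e+x)$, with a constant independent of $C_0$ up to an additive $\log C_0$.

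\emph{Step 1: bound $e+R$ by a power of $e+x$.} Use $\sqrt{\log(e+x)}\leq\log(e+x)\leq e+x$, where the first inequality holds because $\log(e+x)\geq1$. Together with $x\leq e+x$, $C_0\geq1$ and $e\leq C_0(e+x)^2$, this gives
$$
e+R\leq e+C_0(e+x)^2\leq2C_0(e+x)^2 .
$$
Taking logarithms,
$$
\log(e+R)\leq\log(2C_0)+2\log(e+x)\leq\bigl(\log(2C_0)+2\bigr)\log(e+x),
$$
where the last step again uses $\log(e+x)\geq1$.

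\emph{Step 2: choose $C_0$.} It now suffices to have
$$
C_0^2\geq A_0\bigl(\log(2C_0)+2\bigr).
$$
The left side grows quadratically in $C_0$ and the right side only logarithmically, so such a $C_0\geq1$ exists. For a concrete choice, $\log(2C_0)\leq 2C_0\leq 2C_0^2$ gives $A_0(\log(2C_0)+2)\leq 4A_0C_0^2$, but this bound is too weak because it makes both sides quadratic. Instead, use $\log(2C_0)\leq C_0$, valid since $\log(2y)\leq y$ for $y\geq1$. Then it suffices that $C_0^2\geq A_0(C_0+2)$, which holds for
$$
C_0=\max\bigl(1,\,2A_0+2\bigr),
$$
because then $C_0^2\geq(2A_0+2)C_0\geq A_0C_0+2A_0$, using $C_0\geq1$.

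The only real obstacle is the circularity: $R$ appears inside the logarithm. Step 1 handles this by the crude polynomial bound $e+R\leq2C_0(e+x)^2$, which converts it into a harmless additive $\log C_0$, and Step 2 then absorbs that term by choosing $C_0$ large. No case split in $x$ is needed, since $\log(e+x)\geq1$ covers small $x$ uniformly.
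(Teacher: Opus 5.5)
Your proof is correct and follows the same route as the paper: bound $e+R$ by $C_0$ times a fixed expression in $e+x$, so that $\log(e+R)\leq(\log C_0+\text{const})\log(e+x)$, then pick $C_0$ with $C_0^2\geq A_0(\log C_0+\text{const})$. The paper uses the slightly sharper bound $e+R\leq C_0(e+x)\sqrt{\log(e+x)}$ and leaves $C_0$ implicit, whereas you give an explicit $C_0$; one small fix there is that $(2A_0+2)C_0\geq A_0C_0+2A_0$ follows from $C_0\geq2$ (so $A_0C_0\geq2A_0$), since $C_0\geq1$ alone does not justify it when $A_0>2$.
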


\begin{proof}
Put $b=\log(e+x)$, so that $b\geq1$. Then $e+C_0x\sqrt b\leq C_0(e+x)\sqrt b$, and therefore
$$
\log(e+R)\leq\log C_0+b+\frac{1}{2}\log b
\leq\left(\log C_0+\frac{3}{2}\right)b.
$$
We choose $C_0$ so large that $C_0^2\geq A_0(\log C_0+\frac{3}{2})$, which is possible because a square grows faster than a logarithm, and since $R^2=C_0^2x^2b$, the conclusion follows.
\end{proof}

\begin{proof}[Proof of the first assertion of Theorem~\ref{thm:main}]
We apply Lemma~\ref{lem:logarithm} with $x=M_k$ and with $A_0$ equal to the constant of Proposition~\ref{prop:empirical} for $\epsilon=\frac{1}{100}$, and set
\begin{equation}\label{eq:chosen-radius}
\rho=C_0M_k\sqrt{\log(e+M_k)}.
\end{equation}
If $\rho\geq L\sqrt n$, the elementary estimate \eqref{eq:trivial-bound} proves the result on all of $E$, so any set with at least $n-k$ indices suffices, and if $\mathcal F_\rho$ is empty, the observation following \eqref{eq:sphere} leads to the same conclusion. We may therefore suppose that $\rho<L\sqrt n$ and that $\mathcal F_\rho$ is nonempty.

The chosen radius satisfies \eqref{eq:fixed-point-condition}, and hence $\mathcal D_\rho\leq\frac{1}{100}\delta\rho^2$, so Markov's inequality shows that
\begin{equation}\label{eq:good-event}
\mathbb P_J\left\{
\sup_{f\in\mathcal F_\rho}
|\|\Gamma f\|_2^2-\delta\rho^2|
\leq\frac{1}{3}\delta\rho^2
\right\}\geq\frac{97}{100}.
\end{equation}
Choose any realization in this event. Its set $S$ of distinct sampled indices has at most $k$ elements, so that $I=[n]\setminus S$ has at least $n-k$ elements, and Lemma~\ref{lem:radius}, with $c_0=\frac{1}{3}$, shows that $\|f\|_2\leq\rho\|f\|_1$ for every $f\in E_I$. Substituting \eqref{eq:chosen-radius}, we obtain \eqref{eq:main-bound}.
\end{proof}

For the second assertion we want the set of distinct sampled indices to occupy approximately one half of $[n]$, and sampling with replacement approximately $n\log2$ times has this effect. We include the elementary variance calculation needed to make this precise.

\begin{lemma}\label{lem:occupancy}
Let $n\geq4$ and $k=\lfloor n\log2\rfloor$. If $D$ is the number of distinct indices in $k$ independent uniform samples from $[n]$, then
$$
\mathbb P\left\{\left|D-\frac{n}{2}\right|\leq C\sqrt n\right\}
\geq\frac{3}{4}
$$
for an absolute constant $C$.
\end{lemma}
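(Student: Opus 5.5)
We plan to write $D=\sum_{i=1}^n X_i$, where $X_i$ is the indicator that index $i$ occurs among the $k$ samples, and then use Chebyshev's inequality. Three steps are needed: compute the mean exactly, show it lies within $O(1)$ of $\frac n2$, and show the variance is $O(n)$.

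\emph{Mean.} We have $\mathbb P\{X_i=0\}=(1-\frac1n)^k$, so
$$
\mathbb E D=n\left(1-\left(1-\tfrac1n\right)^k\right).
$$
We compare $(1-\frac1n)^k$ with $e^{-k/n}$, using
$$
-\frac1n-\frac1{n^2}\leq\log\left(1-\tfrac1n\right)\leq-\frac1n \qquad (n\geq2).
$$
Multiplying by $k\leq n\log 2$ gives
$$
\left|k\log\left(1-\tfrac1n\right)+\frac kn\right|\leq\frac{\log 2}{n}.
$$
Also $k/n\in[\log2-\frac1n,\log2]$, so $|e^{-k/n}-\frac12|\leq\frac1n$. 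Combining these, $|(1-\frac1n)^k-\frac12|\leq\frac Cn$, and hence $|\mathbb E D-\frac n2|\leq C$.

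\emph{Variance.} We expand
$$
\operatorname{Var}D=\sum_i\operatorname{Var}X_i+\sum_{i\neq i'}\operatorname{Cov}(X_i,X_{i'}).
$$
The first sum is at most $n/4$. For the covariances, it is easiest to work with the complements $Y_i=1-X_i$, since $\operatorname{Cov}(X_i,X_{i'})=\operatorname{Cov}(Y_i,Y_{i'})$. This covariance equals
$$
\left(1-\tfrac2n\right)^k-\left(1-\tfrac1n\right)^{2k}\leq0,
$$
because $1-\frac2n\leq(1-\frac1n)^2$. The indicators are therefore negatively correlated, and $\operatorname{Var}D\leq n/4$.

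\emph{Conclusion.} Chebyshev's inequality gives
$$
\mathbb P\{|D-\mathbb E D|>\sqrt n\}\leq\tfrac14.
$$
Combined with the bound on the mean, this yields $|D-\frac n2|\leq\sqrt n+C\leq C'\sqrt n$ with probability at least $\frac34$, since $n\geq4$.

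The only step needing care is the bookkeeping in the mean estimate: the floor in $k$ and the approximation of $(1-\frac1n)^k$ by $e^{-k/n}$. Both errors are $O(1/n)$ per index, so they contribute $O(1)$ to $\mathbb E D$. The variance bound is immediate from the sign of the covariances, and no concentration inequality beyond Chebyshev is needed.
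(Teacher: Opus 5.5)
Your proposal is correct and follows essentially the same route as the paper: the indicator decomposition $D=\sum_iX_i$, a mean within $O(1)$ of $\frac n2$ obtained from a logarithm estimate together with the floor, nonpositive covariances from $1-\frac2n\leq(1-\frac1n)^2$, and Chebyshev at scale $\sqrt n$. The only cosmetic difference is in the mean estimate. You pass through the intermediate quantity $e^{-k/n}$, whereas the paper compares $k\log(1-\frac1n)$ directly with $-\log2$, which gives the explicit constant $|\mathbb E D-\frac n2|\leq2$.
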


\begin{proof}
Let $X_i$ be the indicator of the event that the index $i$ is sampled, so that $D=\sum_iX_i$ and
$$
\mathbb E D=n\left(1-\left(1-\frac{1}{n}\right)^k\right).
$$
To see how close this mean is to $\frac{n}{2}$, observe that, for $0\leq x\leq\frac{1}{4}$,
$$
|\log(1-x)+x|
=\int_0^x\frac{t}{1-t}dt
\leq\frac{x^2}{2(1-x)}\leq x^2.
$$
Since $0\leq n\log2-k<1$ and $k<n$, it follows that
$$
\left|k\log\left(1-\frac{1}{n}\right)+\log2\right|
\leq\frac{1}{n}+\frac{k}{n^2}<\frac{2}{n},
$$
so that $(1-\frac{1}{n})^k=\frac{1}{2}e^u$ for some $u$ with $|u|\leq\frac{2}{n}\leq\frac{1}{2}$. The mean value theorem shows that $|e^u-1|\leq2|u|$, and hence
$$
\left|\left(1-\frac{1}{n}\right)^k-\frac{1}{2}\right|
\leq\frac{2}{n}
$$
and $|\mathbb E D-\frac{n}{2}|\leq2$.

For distinct indices $i$ and $j$, the probability that both are absent is $(1-\frac{2}{n})^k$, so inclusion--exclusion shows that
$$
\mathbb E(X_iX_j)=1-2\left(1-\frac{1}{n}\right)^k
+\left(1-\frac{2}{n}\right)^k,
$$
and subtracting $\mathbb E X_i\mathbb E X_j$, we find that
$$
\operatorname{Cov}(X_i,X_j)
=\left(1-\frac{2}{n}\right)^k
-\left(1-\frac{1}{n}\right)^{2k}\leq0,
$$
the inequality following from $1-\frac{2}{n}\leq(1-\frac{1}{n})^2$. Therefore
$$
\operatorname{Var}(D)
\leq\sum_{i=1}^n\operatorname{Var}(X_i)\leq\frac{n}{4},
$$
and Chebyshev's inequality shows that $\mathbb P\{|D-\mathbb ED|>\sqrt n\}\leq\frac{1}{4}$. On the complementary event,
$$
\left|D-\frac{n}{2}\right|\leq\sqrt n+2\leq2\sqrt n,
$$
since $n\geq4$, and this proves the lemma with $C=2$.
\end{proof}

\begin{proof}[Proof of the second assertion of Theorem~\ref{thm:main}]
For $n=2$ the assertion follows from \eqref{eq:trivial-bound} after the constant is enlarged. Suppose that $n\geq4$ is even, and set $k=\lfloor n\log2\rfloor$. Then $2\leq k<n$ and
$$
\delta=\frac{k}{n}\leq\log2<\frac{3}{4}.
$$
Also $k\geq\frac{n}{2}$, so $\log(2k)$ is comparable to $\log(2n)$, with absolute constants, and in particular,
$$
M_k\leq CL\sqrt{\log(2n)}.
$$
We choose the radius \eqref{eq:chosen-radius} as above. If the trivial estimate applies, or if $\mathcal F_\rho$ is empty, both spans of any partition satisfy the desired bound, and we choose a partition into two sets of equal size.

Otherwise the event \eqref{eq:good-event} has probability at least $\frac{97}{100}$ and the event in Lemma~\ref{lem:occupancy} has probability at least $\frac{3}{4}$, so a union bound for their complements shows that their intersection has probability at least $1-\frac{3}{100}-\frac{1}{4}=\frac{18}{25}>0$, no independence between these events being required. We choose a sample in the intersection and let $S$ be its set of distinct indices, so that both $S$ and its complement have cardinality within $C\sqrt n$ of $\frac{n}{2}$.

The first part of Lemma~\ref{lem:radius} controls the span of the complement, and for the selected span its additional hypothesis holds because $(1+\frac{1}{3})\delta<1$. Both spans therefore satisfy the norm comparison with constant $\rho$, and the bound on $M_k$ and the definition of $\rho$ prove \eqref{eq:half-bound}.
\end{proof}

\section{Concentration and recovery from coefficient measurements}\label{sec:recovery}

The norm comparison prevents a vector in the selected span from placing too much of its mass on a small set, and this provides another point of contact with the companion papers. The first paper \cite{BIK1} applies $\Lambda(p)$ selection to recovery after the loss of Fourier data, and the second \cite{BIK2} obtains recovery from the restricted isometry of the measured rows. Here the selected large span is the kernel of the measurements, and the comparison of the $L_1$ and $L_2$ norms prevents a vector in the kernel from being too concentrated. This is the null-space mechanism used in \cite{IKLM2024}, which complements the approach through uncertainty and restriction in \cite{IM2025}, and we give the argument in full to show precisely where the improved norm comparison enters.

\begin{corollary}\label{cor:concentration}
Suppose that a subspace $F\subset L_2(\mu)$ satisfies $\|f\|_2\leq R\|f\|_1$ for every $f\in F$. Then for every measurable $A\subset\Omega$ and every $f\in F$,
\begin{equation}\label{eq:mass-concentration}
\int_A|f|d\mu\leq R\sqrt{\mu(A)}\|f\|_1.
\end{equation}
In particular, a nonzero function in $F$ which vanishes almost everywhere outside $A$ must satisfy $\mu(A)\geq\frac{1}{R^2}$.
\end{corollary}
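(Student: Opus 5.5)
The plan is to apply the Cauchy--Schwarz inequality on $A$ and then invoke the hypothesis on $F$. For $f\in F$ and measurable $A\subset\Omega$, write $\int_A|f|d\mu=\int_\Omega 1_A|f|d\mu$ and apply Cauchy--Schwarz in $L_2(\mu)$:
$$
\int_A|f|d\mu\leq\|1_A\|_2\|f\|_2=\sqrt{\mu(A)}\|f\|_2 .
$$
Then replace $\|f\|_2$ by $R\|f\|_1$ using the assumed norm comparison on $F$, which gives \eqref{eq:mass-concentration}. No property of the orthonormal system is needed here; the corollary is a statement about any subspace satisfying the comparison. It applies in particular to the spans $E_I$ furnished by Theorem~\ref{thm:main}, with $R=CM_k\sqrt{\log(e+M_k)}$.

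For the second assertion, let $f\in F$ be nonzero and vanish almost everywhere outside $A$. Then $\int_A|f|d\mu=\|f\|_1$, and $\|f\|_1>0$ because $f$ is nonzero in $L_2(\mu)$, hence nonzero on a set of positive measure. Substituting into \eqref{eq:mass-concentration} gives $\|f\|_1\leq R\sqrt{\mu(A)}\|f\|_1$. Dividing by $\|f\|_1$ and squaring yields $\mu(A)\geq\frac{1}{R^2}$.

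There is no real obstacle. The only points to check are that $\|f\|_1$ is strictly positive before dividing, and that $1_A\in L_2(\mu)$ with $\|1_A\|_2=\sqrt{\mu(A)}$, which holds since $\mu$ is a probability measure. I would present the argument in these two short steps.
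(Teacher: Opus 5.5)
Your proposal is correct and matches the paper's proof: Cauchy--Schwarz on $1_A|f|$ followed by the assumed norm comparison gives the inequality, and the second assertion follows by using $\int_A|f|\,d\mu=\|f\|_1>0$ and dividing. Your explicit checks that $\|f\|_1>0$ and that $\|1_A\|_2=\sqrt{\mu(A)}$ are the only details needed.
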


\begin{proof}
The Cauchy--Schwarz inequality shows that $\int_A|f|d\mu\leq\sqrt{\mu(A)}\|f\|_2$, and inserting the assumed norm comparison, we obtain \eqref{eq:mass-concentration}. If $f$ is supported on $A$, the left side equals $\|f\|_1$, and dividing by this positive number proves the final assertion.
\end{proof}

For an explicit recovery statement, let $\Omega$ consist of $n$ points, each of mass $\frac{1}{n}$, and let $(\phi_i)_{i=1}^n$ be a complete orthonormal system satisfying \eqref{eq:bounded-system}, as is the case, for example, for the characters of a finite abelian group of order $n$, with $L=1$. Let $I$ be the set provided by the first part of Theorem~\ref{thm:main}, put $S=[n]\setminus I$, and define
$$
\mathcal A f=(a_i(f))_{i\in S}.
$$
There are at most $k$ measured coefficients and $\ker\mathcal A=E_I$, and we set
\begin{equation}\label{eq:recovery-radius}
R=CM_k\sqrt{\log(e+M_k)},
\end{equation}
where the constant is large enough for the norm comparison on this kernel.

To interpret these measurements as Fourier data, take $\Omega=G$, where $G$ is a finite abelian group of order $n$, and index the orthonormal basis by its characters $\gamma\in\widehat G$. With the unitary normalization
$$
\widehat f(\gamma)=\frac{1}{\sqrt n}\sum_{x\in G}f(x)\overline{\gamma(x)},
$$
we have
$$
a_\gamma(f)=\frac{1}{n}\sum_{x\in G}f(x)\overline{\gamma(x)}
=\frac{1}{\sqrt n}\widehat f(\gamma),
$$
so knowing $\mathcal A f$ is equivalent to knowing $\widehat f$ on $S$. The set $I$ consists of the unobserved frequencies, and any two signals with the same observed data differ by a function whose Fourier transform is supported in $I$, which are exactly the differences controlled by the norm comparison. The sparsity assumption below concerns the support of the signal in $G$.

\begin{corollary}\label{cor:recovery}
In this finite setting, suppose that an integer $s\geq1$ satisfies
\begin{equation}\label{eq:sparsity-condition}
R\sqrt{\frac{s}{n}}<\frac{1}{2}.
\end{equation}
Then every function $f$ supported on at most $s$ points is the unique solution of the minimization problem
\begin{equation}\label{eq:minimization}
\min\{\|g\|_1:\mathcal A g=\mathcal A f\},
\end{equation}
taken over all complex-valued functions $g$ on $\Omega$, and the same set of measurements works for every such $f$. Here $\|g\|_1=\frac{1}{n}\sum_{x\in\Omega}|g(x)|$, and multiplying the objective by $n$ does not change its minimizers.
\end{corollary}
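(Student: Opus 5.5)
We will use the standard null space argument, with Corollary~\ref{cor:concentration} supplying the null space property. Suppose first that $\mathcal A g=\mathcal A f$. Then $h=g-f$ lies in $\ker\mathcal A$. Since $(\phi_i)$ is a complete orthonormal system on the $n$-point space, every complex function on $\Omega$ lies in $E$, so $\ker\mathcal A=E_I$. By the choice of the constant in \eqref{eq:recovery-radius}, Theorem~\ref{thm:main} gives $\|h\|_2\leq R\|h\|_1$ for all such $h$. Let $A$ be the support of $f$, so that $|A|\leq s$ and $\mu(A)\leq\frac{s}{n}$. Corollary~\ref{cor:concentration} with $F=E_I$ then yields
$$
\int_A|h|d\mu\leq R\sqrt{\frac{s}{n}}\|h\|_1<\frac12\|h\|_1
$$
whenever $h\neq0$. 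Equivalently, $\int_A|h|d\mu<\int_{\Omega\setminus A}|h|d\mu$.

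Next we compare objective values, splitting $\|g\|_1$ over $A$ and its complement. On $A^c$ we have $g=h$, because $f$ vanishes there. On $A$ the reverse triangle inequality gives $|g|\geq|f|-|h|$. Therefore
$$
\|g\|_1\geq\int_A|f|d\mu-\int_A|h|d\mu+\int_{\Omega\setminus A}|h|d\mu
=\|f\|_1+\Bigl(\int_{\Omega\setminus A}|h|d\mu-\int_A|h|d\mu\Bigr).
$$
If $h\neq0$, the bracket is strictly positive, so $\|g\|_1>\|f\|_1$. Hence $f$ is the unique minimizer in \eqref{eq:minimization}. The minimum is attained at $f$ itself, so existence needs no separate argument.

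The uniformity over all $s$-sparse $f$ holds because $I$, and hence $\mathcal A$, was fixed by Theorem~\ref{thm:main} independently of $f$, and the estimate above uses only $|A|\leq s$. Rescaling the objective by $n$ does not change the minimizers.

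There is no real obstacle here. The only points to check are that $\ker\mathcal A=E_I$ and that this kernel carries the norm comparison of the main theorem with constant $R$. Both follow from completeness of the system and from the statement of Theorem~\ref{thm:main}, including the trivial case in which the bound is absorbed by \eqref{eq:trivial-bound}.
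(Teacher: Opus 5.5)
Your proposal is correct and follows the paper's proof essentially step for step: Corollary~\ref{cor:concentration} on the kernel $E_I$ gives $\|h1_T\|_1<\frac12\|h\|_1$ for every nonzero $h$, and splitting $\|f+h\|_1$ over the support and its complement with the reverse triangle inequality gives $\|f+h\|_1>\|f\|_1$. Your extra check that completeness gives $\ker\mathcal A=E_I$ is a harmless detail that the paper states without comment.
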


\begin{proof}
Let $T$ be the support of $f$, and let $h\neq0$ belong to $\ker\mathcal A$. Corollary~\ref{cor:concentration} shows that
$$
\|h1_T\|_1\leq R\sqrt{\frac{|T|}{n}}\|h\|_1
<\frac{1}{2}\|h\|_1,
$$
so that $\|h1_T\|_1<\|h1_{\Omega\setminus T}\|_1$. Every other feasible function has the form $f+h$ for such an $h$, and the triangle inequality shows that
$$
\|f+h\|_1
=\|(f+h)1_T\|_1+\|h1_{\Omega\setminus T}\|_1
\geq\|f\|_1-\|h1_T\|_1+\|h1_{\Omega\setminus T}\|_1
>\|f\|_1,
$$
so $f$ is the unique minimizer. The argument is uniform over all supports of cardinality at most $s$.
\end{proof}

Inserting \eqref{eq:Mk} into \eqref{eq:sparsity-condition}, we see that the conclusion holds whenever
\begin{equation}\label{eq:explicit-sparsity}
s<c\frac{k}{L^2\log(2k)\log(e+M_k)}
\end{equation}
for a sufficiently small absolute constant $c$. The measured set is chosen once and works for every signal with this sparsity, without prior knowledge of its support. For the character basis $L=1$, and if $k=\lfloor\alpha n\rfloor$ for a fixed $0<\alpha<1$, then for sufficiently large $n$ we obtain uniform exact recovery whenever
$$
s<c_\alpha\frac{n}{\log n\log\log n}.
$$
If we use \eqref{eq:old-factor} in the same null-space argument instead, we obtain a sufficient sparsity bound of order $\frac{n}{\log n(\log\log n)^5}$, so the improvement in the norm comparison increases this sufficient range for recovery by a factor of order $(\log\log n)^4$. This comparison concerns the guarantees obtained from the two norm bounds, and it does not identify the largest sparsity for which a given set of measurements permits recovery.

The relation to \cite{IKLM2024} is particularly close, since that paper also applies the estimates for proportional subsystems of \cite{GMPT2008} to the kernel of incomplete coefficient measurements. Its asymptotic theorem uses $o(n)$ sampled coefficients and has success probability tending to one, while our estimate \eqref{eq:explicit-sparsity} specifies the dependence on $k$, and the sampling argument above provides success probability at least $\frac{97}{100}$ for the set of distinct indices in $k$ independent uniform draws, so the two probability statements should be distinguished. We do not assert an improvement over the best restricted isometry bounds. The point is that the sharper norm comparison obtained by descent passes directly into the established mechanism of sparse recovery, and we refer also to \cite{GMPT2007,GMPT2008}.

\section{Further remarks}

The proof separates the two logarithms in the final estimate. The factor $\log(2k)$ comes from recording the positions and labels of the corrections, and the factor $\log(e+\rho)$ from the entropy of the representing measure $Q_f$, while the weighted summation of the histories at the higher levels contributes only a constant depending on the fixed accuracy parameter, so in particular, the number of thresholds contributes no additional logarithm.

The descent calculation itself is short. Once \eqref{eq:loss-gap} is available, the potential decreases by a multiple of $t_j^2$ at every mistake, and the additional work is needed to retain the information in those weights when passing from corrections to a probability estimate. The reconstruction by the largest nonzero threshold, and the dependence of each band only on its own level and the higher levels, are what make this possible.

Our comparison with \cite{GMPT2008} concerns its theorem about proportional subsets, and we do not improve the general majorizing-measure theorem proved there. The argument in that paper first obtains an $L_p$ estimate with a factor involving $(p-1)^{-\frac{5}{2}}$ and then approaches $L_1$ by choosing $p$ close to one, whereas here the probability representation permits descent directly on the $L_1$ ball.

For bounded $L$, it remains natural to ask whether the proportional bound can be reduced further to $C\sqrt{\log n}$. The Walsh-system obstruction discussed in the appendix of \cite{GMPT2008} shows why a logarithmic loss cannot be removed altogether in general. Our proof leaves the additional factor $\sqrt{\log\log n}$, and removing it would require either a sharper way of representing the relevant functions or a probability argument which uses more than the uniform entropy bound of Lemma~\ref{lem:comparator}.

Another question is whether the uniform empirical estimate can be accompanied by sharper probability bounds for a model of sampling with fixed cardinality. The present proof uses an estimate in expectation together with Markov's inequality, which suffice for existence and for the simultaneous comparison on two complementary spans, but the polynomial failure bounds of the first paper in the series do not transfer automatically to this argument, and such a transfer would require a separate concentration estimate.

Finally, the predictors used here are devices for proving a bound on descriptions, and their updates may involve integration over an arbitrary probability space, so turning this proof into an efficient method for constructing the selected subsystem is a different problem. What the argument supplies, without any algorithmic assumption, is a complete passage from a decreasing entropy potential to a uniform norm inequality.


\begin{thebibliography}{99}

\bibitem{BeckTeboulle}
A. Beck and M. Teboulle,
Mirror descent and nonlinear projected subgradient methods for convex optimization,
Operations Research Letters 31 (2003), no. 3, 167--175.
\href{https://doi.org/10.1016/S0167-6377(02)00231-6}{doi:10.1016/S0167-6377(02)00231-6}.

\bibitem{Bourgain1989}
J. Bourgain,
Bounded orthogonal systems and the $\Lambda(p)$-set problem,
Acta Mathematica 162 (1989), 227--245.
\href{https://doi.org/10.1007/BF02392838}{doi:10.1007/BF02392838}.

\bibitem{Bourgain2014}
J. Bourgain,
An improved estimate in the restricted isometry problem,
in Geometric Aspects of Functional Analysis,
Lecture Notes in Mathematics 2116, Springer, Cham, 2014, 65--70.
\href{https://doi.org/10.1007/978-3-319-09477-9_5}{doi:10.1007/978-3-319-09477-9\_5}.

\bibitem{BIK1}
W. Burstein, A. Iosevich, and B. Krause,
Bourgain's $\Lambda(p)$ selection theorem: a greedy proof with polynomial failure bounds,
working manuscript, 2026. First paper in the series.

\bibitem{BIK2}
W. Burstein, A. Iosevich, and B. Krause,
Restricted isometry of sampled Fourier and Hadamard matrices via entropic descent,
working manuscript, 2026. Second paper in the series.

\bibitem{GMPT2007}
O. Gu\'edon, S. Mendelson, A. Pajor, and N. Tomczak-Jaegermann,
Subspaces and orthogonal decompositions generated by bounded orthogonal systems,
Positivity 11 (2007), no. 2, 269--283.
\href{https://maths-people.anu.edu.au/~mendelso/papers/GMPT-Hadamard-31a-08.pdf}{Author manuscript}.

\bibitem{GMPT2008}
O. Gu\'edon, S. Mendelson, A. Pajor, and N. Tomczak-Jaegermann,
Majorizing measures and proportional subsets of bounded orthonormal systems,
Revista Matem\'atica Iberoamericana 24 (2008), no. 3, 1075--1095.
\href{https://doi.org/10.4171/RMI/567}{doi:10.4171/RMI/567}.

\bibitem{IKLM2024}
A. M. Iosevich, B. S. Kashin, I. V. Limonova, and A. Mayeli,
Subsystems of orthogonal systems and the recovery of sparse signals in the presence of random losses,
Russian Mathematical Surveys 79 (2024), no. 6, 1095--1097.
\href{https://doi.org/10.4213/rm10213e}{doi:10.4213/rm10213e}.

\bibitem{IM2025}
A. Iosevich and A. Mayeli,
Uncertainty principles, restriction, Bourgain's $\Lambda_q$ theorem, and signal recovery,
Applied and Computational Harmonic Analysis 76 (2025), article 101734.
\href{https://doi.org/10.1016/j.acha.2024.101734}{doi:10.1016/j.acha.2024.101734}.

\end{thebibliography}
\end{document}